\newtheorem{theorem}{Theorem}[section]
\newtheorem{lemma}[theorem]{Lemma}
\newtheorem{corollary}[theorem]{Corollary}
\theoremstyle{definition}
\theoremstyle{remark}
\numberwithin{equation}{section}
\newcommand{\cA}{\mathcal{A}}
\newcommand{\C}{\mathbb{C}}
\newcommand{\N}{\mathbb{N}}
\newcommand{\St}{\mathbb{S}}
\newcommand{\R}{\mathbb{R}}
\newcommand{\sn}{\mathbb{S}^n}
\newcommand{\no}{\noindent}
\newcommand{\la}{\lambda }
\newcommand{\Om}{\Omega }
\newcommand{\ra}{\rightarrow}
\renewcommand{\:}{\colon}
\newcommand{\adjoint}{\operatorname{adj}}
\newcommand{\diver}{\operatorname{div}}
\newcommand{\rn}{\operatorname{\R^n}}
\newcommand{\wR}{\widehat \R}
\newcommand{\sub}{\subseteq}
\newcommand{\CAP}{\operatorname{Cap}}
\newcommand{\defeq}{\mathrel{\mathop:}=}
\begin{document}

\title{The Rickman-Picard Theorem}   
\author{Mario Bonk and Pietro Poggi-Corradini}
\dedicatory{Dedicated to the memory of Juha Heinonen and Seppo Rickman}

\thanks{M.B.\ was partially supported by  NSF grant DMS~1506099. P.P.-C.~was  partially supported by  NSF grant DMS~1515810.}

\date{July 19, 2018}

\address{Department of Mathematics, University of California, Los Angeles, CA 90055, USA.}
\email{mbonk@math.ucla.edu}

\address{Department of Mathematics, Cardwell Hall, Kansas State University,
Manhattan, KS 66506, USA.}
\email{pietro@math.ksu.edu}


\begin{abstract} 
We give a new and conceptually simple  proof of the Rickman-Picard theorem for quasiregular maps based on po\-ten\-tial-theoretic methods. \end{abstract}

\maketitle

\section{Introduction}\label{sec:intro}  

The classical Picard theorem in complex analysis states that a non-constant analytic function defined on the complex plane  $\C$  omits at most one complex value. There are  a dozen or so proofs of this theorem using surprisingly diverse and unexpected approaches. In the 1970s, efforts were made to generalize this theorem to quasiregular maps defined on real Euclidean spaces. Although quasiregular maps (defined below) provide a generalization of analytic maps, many of the proofs that work in the analytic case fail in the higher-dimensional setting. In 1980 Seppo Rickman  \cite{Ri1} was the first to establish  an analog of Picard's theorem for quasiregular maps in higher dimensions. His proof was based  on the concept of   modulus for  path families. Later non-linear potential theory was used to give alternative proofs.  To formulate the Rickman-Picard Theorem, we first review  some basic definitions. 

Let $M$ and $N$ be connected and oriented Riemannian $n$-manifolds, where $n\ge 2$. 
A map $f\: M\ra N$ is called $K$-{\em quasiregular}, where $K\ge 1$, if 
$f$ has distributional derivatives that are $L^n$-integrable (with respect to the Riemannian  measure on $M$) and if the formal differential 
$Df(p)\: T_pM\ra T_{f(p)}N $ satisfies 
$$ \Vert Df(p)\Vert^n \le K \det(Df(p))$$ 
for almost every $p\in M$. Here $\Vert Df(p)\Vert$ denotes operator norm of 
$Df(p)$ with respect  to the norms on the tangent spaces $T_pM$  and $T_{f(p)}N$ 
induced by the Riemannian structures on $M$ and $N$, respectively. 
We call $f\: M\ra N$ {\em quasiregular} if it is $K$-quasiregular for some $K\ge 1$. 

Reshetnyak proved that  a quasiregular map 
 has a continuous representative in its Sobolev class.  In the following,  we will always assume that a 
quasiregular map is continuous, and, in addition,  that it is non-constant.
With these conventions it is known, again by famous results due to Reshetnyak, that a quasiregular map is  open and discrete
(see \cite{Re} for a systematic exposition of Reshetnyak's results on the regularity theory of quasiregular maps).

Let $\rn$ be Euclidean $n$-space, and $\mathbb S^n$ be the unit sphere in $\R^{n+1}$ equipped with the induced Riemannian metric. 
Then  the Rickman-Picard theorem can be formulated as follows
(see \cite[Chapter~4]{Ri}). 

\begin{theorem}\label{rickpic}
 Let $f\:\rn\to\sn$ be a $K$-quasiregular  map. Then $f$ can omit at most $q_0=q_0(n,K)<\infty$ points in $\sn$.
 \end{theorem}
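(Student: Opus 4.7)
My plan is to argue by contradiction and transfer potential theory through $f$ using two standard tools from nonlinear potential theory for the $n$-Laplacian (or, more generally, for $\mathcal{A}$-harmonic operators with $p=n$):

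\textbf{(i)} The \emph{pullback principle} for quasiregular maps: if $u$ is a nonnegative $\mathcal{A}$-superharmonic function on an open set $V\subset\sn$, then $u\circ f$ is $\widetilde{\mathcal{A}}$-superharmonic on $f^{-1}(V)\subset\rn$, where the structural constants of $\widetilde{\mathcal{A}}$ depend only on $K$ and those of $\mathcal{A}$. \textbf{(ii)} The \emph{$n$-parabolicity} of $\rn$: a bounded $\widetilde{\mathcal{A}}$-superharmonic function on all of $\rn$ (for any such $\widetilde{\mathcal{A}}$) must be constant, equivalently, compact sets carry zero conformal capacity relative to $\rn$.

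Assuming $f$ omits distinct points $a_1,\dots,a_q\in\sn$, choose disjoint small geodesic balls $B_i\ni a_i$. For each $i$ I would construct on $\sn\setminus\{a_i\}$ a nonnegative bounded $\mathcal{A}_i$-superharmonic function $u_i$, normalized to vanish at $a_i$ and to satisfy a uniform lower bound $u_i\ge c_0>0$ off $B_i$ (the standard Green-type potential for the $n$-Laplace operator on a punctured sphere, or a capacitary potential associated with a fixed annulus around $a_i$). The sum $v\defeq \sum_{i=1}^q u_i\circ f$ is then $\widetilde{\mathcal{A}}$-superharmonic on $\rn$ by (i) and subadditivity. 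Since at each $x\in\rn$ the point $f(x)$ lies in at most one $B_i$, one obtains the pointwise lower bound $v(x)\ge (q-1)c_0$, so $v$ grows linearly in $q$.

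The contradiction should now come from (ii): if $v$ were globally bounded, parabolicity would force it to be constant, which is incompatible with nontriviality of the singular contributions near each $a_i$. Alternatively, one can compare $v$ on a large ball $B_R\subset\rn$ against an $\widetilde{\mathcal{A}}$-harmonic Green function of $B_R$ and push $R\to\infty$; the capacity decay provided by $n$-parabolicity caps the admissible number of singular contributions, yielding $q\le q_0(n,K)$.

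The main obstacle is the quantitative accounting: one must show that each omitted point contributes a \emph{uniform} positive amount of potential-theoretic mass to $v$, independently of how $f$ behaves, and must extract from the parabolicity of $\rn$ a finite global capacity budget. The first half needs Harnack-type estimates and capacity lower bounds near each $a_i$ together with Reshetnyak's openness/discreteness to localize the pullbacks; the second half needs a careful capacity computation on large balls of $\rn$ using the structural constants of $\widetilde{\mathcal{A}}$, which inflate with $K$. Balancing these two estimates is what will produce the explicit bound $q_0(n,K)$, and it is precisely at this balance point that the potential-theoretic proof distinguishes itself from Rickman's original modulus-of-path-families argument.
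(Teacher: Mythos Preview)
Your argument breaks at the line ``$v=\sum_{i=1}^q u_i\circ f$ is $\widetilde{\mathcal A}$-superharmonic by (i) and subadditivity.'' The operators in question (the $n$-Laplacian on the target and the pulled-back operator $\widetilde{\mathcal A}$ on $\R^n$) are genuinely nonlinear for $n\ge 3$, and the class of $\widetilde{\mathcal A}$-superharmonic functions is \emph{not} closed under addition; there is no ``subadditivity'' principle here. What survives from the linear theory is closure under pointwise minimum, not sum. So $v$ has no reason to satisfy any elliptic inequality of the required type, and the parabolicity/Liouville step cannot be applied to it. This is not a technicality that Harnack estimates can repair; it is the place where the nonlinear theory forces a completely different organization of the proof. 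Even setting this aside, the second half of your sketch only restates the difficulty: $n$-parabolicity of $\R^n$ is a qualitative Liouville statement and does not by itself produce a finite ``capacity budget'' that one can compare to $q$.

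The paper proceeds quite differently and never sums anything. It pulls back $n$-\emph{sub}harmonic functions $v_k$ (each with a logarithmic pole at $a_k$) chosen so that every $u_k=v_k\circ f$ is $\mathcal A$-subharmonic with the \emph{same} Riesz measure $A$, namely the $f$-pullback of spherical volume. The link between the $u_k$'s is through this common measure, not through superposition. One then shows: if a ball $B$ carries large $A$-mass, it must meet all $q$ disjoint superlevel sets $\{u_k>L_0\}$; a capacity crowding estimate on the annulus $2B\setminus B$ forces some $u_k$ to grow rapidly there; a Caccioppoli-type inequality converts that growth back into $A(8B)\gtrsim q^{1/(n-1)}A(B)$. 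This ``inflation'' is then contradicted by Rickman's Hunting Lemma, which says no atomless locally finite measure on $\R^n$ can inflate by more than a dimensional constant on every large-mass ball. The quantitative bound $q\le q_0(n,K)$ thus emerges from comparing two growth rates of the single measure $A$, and no Harnack inequality is used.
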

 
Here  the maximal number $q_0(n,K)$ of omitted points
 only depends on $K\ge 1$ and $n\ge 2$ and not on the specific map $f$.   
 In dimension $n=2$ we actually have $q_0(2,K)=2$ independently of $K$. This follows from the classical Picard theorem and the well-known fact that every quasiregular map $f\:\R^2\ra \mathbb{S}^2$ can be written in the form $f=g\circ \varphi$, where 
 $g\: \R^2\ra \mathbb{S}^2$ is analytic and $\varphi\: \R^2\ra \R^2$ is a quasiconformal homeomorphsim. In dimensions $n\ge 3$ we have $q_0(n,K)\to \infty $ as $K\to \infty$, and so Theorem~\ref{rickpic} is qualitatively best possible. This was shown by Rickman for $n=3$ \cite{Ri2} and more recently for arbitrary $n\ge 3$ 
 by Drasin and Pankka \cite{DP}.   
 
In this paper  we give a new and   streamlined proof for the Rickman-Picard
theorem for  quasiregular maps. Our proof  is based on ideas from non-linear potential theory. In contrast to earlier proofs, notably by Eremenko-Lewis \cite{EL} and by Lewis \cite{Le}, we  will not use any results from non-linear potential theory 
established in the literature such as the  Harnack inequality.
We will rely on simple integral inequalities that are fairly easy to establish from first principles.
 
This paper 
is essentially  self-contained except that we take the regularity theory of quasiregular maps and some of their basic topological properties for granted.

  \smallskip \no
{\bf Acknowledgments.} Our work would not have been possible without Seppo Rickman's deep insights into the  geometry 
of quasiregular maps. Many of the basic ideas in this paper 
originated in discussions with our late friend  Juha Heinonen. He would be a co-author if he were still alive. We  dedicate this paper to Juha's  and Seppo's memory.  

\section{Synopsis}
In this introductory section we will give an outline of our proof of the Rickman-Picard Theorem.

The starting point is, as usual, a non-constant $K$-quasiregular map $f\:\rn\to\sn$, $n\ge 2$,  that omits $q$ distinct values $a_1,\dots,a_q$ in $\sn$. The goal is to find a bound on $q$ depending only on the dimension $n$ and the distortion $K$. 

We consider the spherical volume $\sigma_n$ on $\sn\cong\wR^n=\R^n\cup\{\infty\}$ given by the explicit expression
\begin{equation}\label{eq:sign} d\sigma_n(x) = \frac {2^n}{(1+|x|^2)^n}\,   d \lambda_n (x) ,
\end{equation}
where $\la_n$ denotes Lebesgue measure on $\R^n$. The growth behavior of $f$ is controlled
by the  measure $A$ on $\R^n$ obtained by pulling $\sigma_n$ back by $f$; so
\begin{equation}\label{eq:A}
dA(x)= \frac{2^n \det (Df(x))} {(1+|f(x)|^2)^n}\, d\lambda_n(x).  
\end{equation} 

Part of our argument implies that if $q\ge 3$, then $A(\R^n)=\infty$. Actually, one can  show  that if  $f\:\rn\to\sn$ is a quasiregular map with $A(\R^n)<\infty$, then $f$ has an extension to $\infty$ (and can therefore omit at most one value), but   we  will not use this fact. 

The  basic idea now is to show that there are 
constants $C_0>0$ and $C_1=C_1(q,n,K)>0$ such that 
\begin{equation}\label{eq:inflmech}
A(8B)> C_1 A(B), \end{equation}
whenever $B\sub \R^n$ is a ball with $A(B)>C_0$. 
Here one has no good quantitative control for $C_0$, but we will have  $C_1=C_1(q,n,K)>>1$ if $q>>1$.

On the other hand, the following elementary fact is true.

\begin{lemma}[Rickman's  Hunting Lemma] \label{lem:hunt}\hskip -0.2cm\footnote{In  discussions with Juha we jokingly referred to the lemma under this name, because Seppo Rickman  used it to establish  his  Picard Theorem and  in the proof of the lemma a suitable point is ``hunted down". Our proof presented in Section~\ref{sec:prrickpic} is very similar to an argument that can be found in  \cite[p.~85]{Ri}.} Let $\mu$ be a Borel measure on $\R^n$ without atoms 
such that  $\mu(\R^n)=\infty$ and $\mu(B)<\infty$ for each ball $B\sub \R^n$.  Then for some  constant $D=D(n)>1$ the following statement is true:  
 for every  $C>0$ there exists a ball $B=B(a,r)\sub \R^n$ such that
  $$ \mu(B)> C \text{ and } \mu(8B)\le D \mu(B).$$ 
\end{lemma}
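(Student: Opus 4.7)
The plan is to argue by contradiction. Suppose for some $C > 0$ we have $\mu(8B) > D\mu(B)$ for every ball $B \subset \R^n$ with $\mu(B) > C$, where the dimensional constant $D = D(n)$ will be fixed below. I will inductively construct a sequence of balls $B_k = B(a_k, r_k)$ whose radii $r_k = 8^{-k} r_0$ shrink geometrically, whose centers $a_k$ converge to a single point $a_\infty \in \R^n$, and whose $\mu$-masses $\mu(B_k)$ diverge. Non-atomicity of $\mu$ at $a_\infty$ will then deliver the contradiction.

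Let $N = N(n)$ be the minimal number of balls of radius $r/8$, with centers inside a given ball of radius $8r$, needed to cover that ball; this is a dimensional constant because the ratio of radii is fixed at $64$. Set $D \defeq 2N$. Start from any $B_0 = B(a_0, r_0)$ with $\mu(B_0) > C$, which exists because $\mu(\R^n) = \infty$. Inductively, given $B_k$ with $\mu(B_k) > C$, the contradiction hypothesis yields $\mu(8B_k) > 2N \mu(B_k)$; cover $8B_k$ by at most $N$ balls of radius $r_{k+1} = r_k/8$ centered in $8B_k$ and take $B_{k+1}$ to be one of maximum $\mu$-mass. Then
\begin{equation*}
\mu(B_{k+1}) \geq \frac{\mu(8B_k)}{N} > 2\mu(B_k),
\end{equation*}
so $\mu(B_k) \geq 2^k \mu(B_0) \to \infty$ and the hypothesis remains applicable at every step.

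Because $a_{k+1} \in 8B_k$, we have $|a_{k+1} - a_k| \leq 8 r_k = 8^{1-k} r_0$, which is a summable series. Hence $a_k \to a_\infty$ for some $a_\infty \in \R^n$, with $|a_k - a_\infty| = O(8^{-k})$; combined with $r_k \to 0$ this gives $B_k \subset B(a_\infty, \rho_k)$ for some $\rho_k \to 0$. Since $\mu$ has no atoms and $\mu(B(a_\infty, 1)) < \infty$, continuity of measure from above yields $\mu(B(a_\infty, \rho_k)) \to \mu(\{a_\infty\}) = 0$, so $\mu(B_k) \to 0$, contradicting $\mu(B_k) \to \infty$.

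The main obstacle is reconciling two competing tendencies in the construction: shrinking the radii geometrically traps the balls near a single point (where non-atomicity can eventually bite), but shrinking also dilutes the $\mu$-mass inherited through the covering step. Matching a $64$-fold radius reduction against the covering number $N(n)$ and building in a safety factor of $2$ (hence $D = 2N$) ensures that the mass at least doubles at each step while the diameters decay like $8^{-k}$, so both halves of the contradictory pair can be achieved simultaneously. Naive variants of this iteration that keep the radii constant would let the centers drift off to infinity, and the local finiteness of $\mu$ would give no contradiction.
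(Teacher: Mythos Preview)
Your proof is correct and follows essentially the same hunting argument as the paper: both argue by contradiction, iteratively cover $8B_k$ by a dimensional number of smaller balls to produce a nested sequence with shrinking radii and controlled $\mu$-mass, and derive a contradiction from non-atomicity at the limit point of the centers. The only cosmetic differences are that the paper halves the radii (rather than dividing by $8$) and takes $D$ equal to the covering number itself so that $\mu(B_k)>C$ is merely preserved, whereas you build in a factor of $2$ so that $\mu(B_k)\to\infty$; either variant works.
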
 
See Section~\ref{sec:prrickpic} for the proof.
A version of this statement  was formulated  in  \cite[Lemma~5.1]{Ri2}. 
If one applies Lemma~\ref{lem:hunt}  to $\mu=A$, then one can derive  a contradiction with
 \eqref{eq:inflmech}, because $C_1(q,n,K)>D(n)$ if $q$ is large enough depending on $n$ and $K$. 

The inequality \eqref{eq:inflmech} should be thought of as an ``inflation mechanism" for the measure $A$
if $q$ is large: if the mass of a ball $B$ exceeds  a certain critical threshold $C_0$, then the mass ``explodes" and increases by a large multiplicative constant $C_1$ if we pass to the eight times  larger ball $8B$. Lemma~\ref{lem:hunt} says that such inflationary behavior is impossible for  measures on $\R^n$ if the measure satisfies some mild conditions and  if $C_1>D(n)$ . 

The main part of the proof is now to set up this inflation mechanism. For this one constructs certain auxiliary functions $v_1, \dots, v_q$ on $\sn$ so that $v_k$ becomes  large only near the omitted value
$a_k$, and pulls these functions back by $f$. 
This idea is standard  and the common  choice is to use functions of the form 
\begin{equation}
v_k(y)=\log^+\frac{\delta}{|y-a_k|}
\end{equation}
with suitable $\delta>0$ (where $a_k\ne \infty$).
These functions  were employed  in the proof of the Rickman-Picard Theorem by Eremenko and Lewis \cite{EL}, for example.  The basic function $\log^+|y|$  is well-known in this context and can be traced back 
to Nevanlinna's theory of value distribution for analytic functions.

In contrast, the elegant Ahlfors-Shimizu variant of value distribution theory (see \cite[Section VI.3]{Ne}) uses the function $\log(1+|y|^2)$
here instead (this  is essentially the K\"ahler potential of the spherical metric on $\widehat\C$). 
Our main new observation is that by using a higher-dimensional analog  of this function,
the potential-theoretic approach to the Rickman-Picard Theorem becomes substantially simpler on a conceptual level.

Namely, for each dimension $n\ge 2$, Lemma~\ref{lem:auxfunc} below guarantees the existence of a radially symmetric  function $v\: \R^n\ra [0,\infty)$ with a logarithmic singularity at $\infty$ such that 
$\Delta_nv=\sigma_n$, where $\Delta_n$ is the $n$-Laplacian and the equation has to be interpreted in the distributional sense. So $v$ is an $n$-subharmonic function with Riesz measure $\sigma_n$. We  then define $v_k=v\circ R_k$, where $R_k$ is a rotation of $\sn$ that moves $a_k$ to  the point $\infty\in \sn\cong \wR^n$. 
Then $v_k$ is still an $n$-subharmonic function with Riesz measure $\sigma_n$, which implies that 
  $u_k=v_k\circ f$ is a non-negative $\mathcal{A}$-subharmonic function with Riesz measure 
  $A$ (see Lemma~\ref{lem:pullharm}). The main point here is that each of these functions has the {\em same} Riesz measure $A$ giving a direct link to the map $f$.
  
Now  it is well-known that the growth of a non-negative subharmonic function is related to the growth of its Riesz measure (see Lemmas~\ref{lem:Riesz<grow} and \ref{lem:grow<Riesz}). Due to the construction of the functions $u_1, \dots, u_q$, the superlevel sets $\{u_k>L_0\}$, $k=1, \dots, q$, are disjoint if $L_0$ is large enough.  On the other hand,  one can show that  if  a ball $B\sub \R^n$ has sufficiently large $A$-mass, then 
$B$ meets all these superlevel sets (Corollary~\ref{cor:meettract}). So these sets  are crowded together near such a ball $B$; if $q$ is large and so there are many such sets, one of them has to be fairly narrow (a precise quantitative version of this crowding phenomenon is given in Lemma~\ref{lem:capest} based on a notion of $n$-capacity). On the other hand, if a non-negative subharmonic function is  supported on a narrow set, then it has to grow fast which in turn drives up its  Riesz measure $A$ (see Lemma~\ref{lem:grow<Riesz}).  In this way, we obtain the desired inflation mechanism 
\eqref{eq:inflmech}, which leads to the proof of Theorem~\ref{rickpic} as we discussed.
For the full details of this argument see Section~\ref{sec:prrickpic}.

Our proof of Theorem~\ref{rickpic} is  fairly  self-contained. In particular, we will not rely on any 
auxiliary results from non-linear potential theory, but will give proofs of all relevant facts. An advantage of our approach is that we do not use any  Harnack-type inequality for 
$\mathcal{A}$-harmonic functions, but we will only use simple integral estimates of 
Caccioppoli-type that are fairly easy to establish. 

  We do take the regularity theory of quasiregular maps for granted though; namely, a quasiregular map $f\: \R^n\ra \R^n$ is continuous, open, and discrete, and its  derivative $Df(x)$ exists and is non-singular for almost every $x\in \R^n$.

\section{$n$-harmonic and $n$-subharmonic functions} \label{sec:nharm}

In this and the following section we will develop the necessary tools from non-linear potential theory. We will prove all the relevant statements,  but we assume that the reader is familiar with 
some basic facts from the theory of Sobolev spaces (see \cite{Zi} for background). 

We use fairly standard notation. We write $X\lesssim Y$ for two quantities 
$X$ and $Y$ if $X\le CY$ for some constant $C\ge 0$ only depending on some ambient parameters specified in the given setting. 

 We denote by $B(a,r)=\{x\in \R^n: |x-a|<r\}$ the open Euclidean ball of radius $r>0$ centered at $a\in \R^n$. If $B=B(a,r)$ is such a ball and $\lambda>0$, then we define 
$\lambda B= B(a,\lambda r)$. If $x=(x_1, \dots, x_n)$ and 
$y=(y_1, \dots , y_n)$ are points in $\R^n$, then 
$$ x\cdot y = x_1 y_1+\dots + x_n y_n$$ 
stands for the standard Euclidean scalar product or dot product. 

If $K\sub \R^n$ is a measurable set and  $1\le p<\infty$, then   $L^p(K)$ denotes  the space of all (equivalence classes of) $L^p$-integrable functions on $K$.
If $\Om\sub \R^n$ is some open set, then $C_c(\Om)$ is the space of all continuous and $C^\infty_c(\Om)$  the space of all $C^\infty$-smooth (real-valued) functions on $\Om$ with compact support. The space $W^{1,n}_{loc}(\Om)$ is the Sobolev space of all  functions $f$ on $\Om$ that are locally $L^n$-integrable with first-order  weak partial derivatives  that are also locally $L^n$-integrable.  We also use this notation for $\R^n$-valued maps $f\: \Om\ra \R^n$ with the understanding that then each component function of $f$ lies in $W^{1,n}_{loc}(\Om)$.

Let $\Om \sub \R^n $ be an open set. 
A function $u\: \Om \ra \R$ is called $n$-{\em harmonic} if its $n$-{\em Laplacian} vanishes, i.e., if 
\begin{equation} \label{eq:nharm} 
\Delta_n u:=\diver(|\nabla u|^{n-2} \nabla u)=0. 
\end{equation}  
Here $\nabla u\: \Om\ra \R^n$ denotes the gradient of $u$ and $\diver V$
the divergence of a vector field $V$. 
This equation arises as the Euler-Lagrange equation for the minimization of the $n$-{\em energy} of $u$ 
given as $$\int_{\Om}|\nabla u|^n. $$ 
Here and in similar integrals below, integration is against  Lebesgue measure 
$\lambda_n$ on $\R^n$ unless otherwise specified. 
 
For \eqref{eq:nharm} to be meaningful, one has to assume that the function $u$ is sufficiently smooth, say $C^2$-smooth.  To allow  more general functions, one can formulate an equivalent definition based on integration against  smooth test functions. Accordingly, we say that a function $u\in W^{1,{n}}_{loc}(\Om)$ is $n$-{\em harmonic}
 if 
\begin{equation} \label{eq:nharm2} 
\int_\Om |\nabla u|^{n-2} \nabla u \cdot \nabla \varphi =0
\end{equation}  
for all $\varphi \in C_c^\infty(\Om)$.  For $C^2$-smooth functions this  definition of an $n$-harmonic
function is equivalent with the one given in \eqref{eq:nharm}. 

We say that $u\in W^{1,{n}}_{loc}(\Omega)$ is $n$-{\em subharmonic} if there exists a positive Borel measure 
$\mu$ on $\Om$ such that 
\begin{equation} \label{eq:subharm} 
-\int_\Om |\nabla u|^{n-2} \nabla u \cdot \nabla \varphi =\int_\Om \varphi\, d\mu 
\end{equation}  for all $\varphi \in C_c^\infty(\Om)$. 
The measure $\mu$ is uniquely determined by $u$ and is called the {\em Riesz measure} of $u$. 
We will write the relation of $u$ and $\mu$ in the  symbolic (or rather distributional) form
$$ \Delta_nu=\mu$$ 
with the understanding that this is interpreted to mean that \eqref{eq:subharm} holds. This property is 
{\em local}, i.e., in order to verify it, it is enough to show that for each $p\in \Om$ the identity \eqref{eq:subharm} is true  for all functions $C_c^\infty(\Om)$ with support in a small neighborhood of $p$. 
An  $n$-subharmonic function is $n$-harmonic precisely if its Riesz measure vanishes identically.

Under conformal maps, $n$-harmonic or $n$-subharmonic functions pull-back to functions of the same type. 
More precisely, let $\Om, \widetilde \Om\sub \R^n$ be open sets, and $f\: \Om \ra \widetilde \Om$ be a (smooth) conformal map. If $\widetilde u\: \widetilde \Om\ra \R$ is $n$-subharmonic, then $u:=\widetilde u\circ f
\: \Om \ra \R$ is also $n$-subharmonic. Moreover, if  $\widetilde \mu$ and $\mu$ are the Riesz measures of 
$\widetilde u$ and $u$, respectively, then $f_*\mu=\widetilde \mu$. 
Here $f_*\mu$ denotes the push-forward of $\mu$ by $f$. This immediately follows from 
\eqref{eq:subharm} and  the transformation formula for integrals. 
 If $\widetilde u$ is $n$-harmonic, then $ \widetilde \mu$ and $\mu$ vanish identically, and so 
$u$ is also $n$-harmonic. This can also be deduced from the conformal invariance of the $n$-energy.

The  locality and  conformal invariance properties of $n$-harmonic and $n$-subhar\-monic functions make it possible to extend these concepts to functions defined on open sets $\Om \sub \wR^n=\R^n\cup \{\infty\}$, possibly containing the point $\infty\in  \wR^n$.  One then verifies 
\eqref{eq:subharm} near $\infty$ after the conformal coordinate change $x\mapsto x/|x|^2$ that sends $\infty$ to $0$. 

We denote by $\sigma_n$ the {\em spherical measure}  on $\wR^n$; so 
$$ d\sigma_n(x) = \frac {2^n}{(1+|x|^2)^n}\,   d \lambda_n (x). $$
We set 
$$\omega_n=\sigma_n(\sn)=\sigma_n(\wR^n)= \frac{2\pi^{(n+1)/2}}{\Gamma((n+1)/2)}. $$ 
The following lemma provides an  auxiliary $n$-subharmonic function on $\R^n$ whose Riesz measure is equal to $\sigma_n$.  

\begin{lemma}\label{lem:auxfunc}
For  fixed $n\ge2$ we define $h\:[0,\infty)\ra 
[0,\infty)$ by setting 
$$ h(r)=\frac{1}{\omega_{n-1}^{1/(n-1)}} \int_0^r \sigma_n(B(0,t))^{1/(n-1)}\,\frac{dt}{t} \quad \text{ for }r\ge 0. $$
Let   $v\: \R^n \ra \R$ be the radially symmetric function given by $v(x):=h(|x|)$ for $x\in \R^n$. Then $v$ and the vector field $V\defeq |\nabla v|^{n-2}\nabla v$  are $C^1$-smooth in $\R^n$, and  we have 
\begin{equation}\label{eq:divV}
 (\diver V)(x) =  \frac {2^n}{(1+|x|^2)^n}, \quad x\in \R^n.
  \end{equation} 
 \end{lemma}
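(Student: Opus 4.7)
The plan is to exploit the radial symmetry of $v$ and reduce everything to one-variable computations involving $s(r) := \sigma_n(B(0,r))$. For $x \ne 0$ I have $\nabla v(x) = h'(|x|)\, x/|x|$, and the fundamental theorem of calculus yields $h'(r) = s(r)^{1/(n-1)}/(\omega_{n-1}^{1/(n-1)}\, r) \ge 0$. Raising to the $(n-1)$st power to eliminate the fractional exponents,
$$ V(x) = h'(|x|)^{n-1}\, \frac{x}{|x|} = \frac{s(|x|)}{\omega_{n-1}\, |x|^n}\, x. $$
With this explicit formula in hand, the remaining tasks are (i) verify the smoothness assertions at $x = 0$, and (ii) compute the divergence.

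For (i), I would set $\psi(r) := s(r)/r^n$ for $r > 0$ and prove that $\psi$ extends to a $C^\infty$ even function on $\R$. The cleanest route is the rescaling $t = ru$ in the integral defining $s(r)$, which gives
$$ \psi(r) = \omega_{n-1} \int_0^1 u^{n-1} \frac{2^n}{(1+r^2u^2)^n}\, du. $$
The integrand is smooth and even in $r$, and one may differentiate under the integral. Hence $V(x) = \psi(|x|)\, x/\omega_{n-1}$ is $C^\infty$ on all of $\R^n$, since a smooth even function of $|x|$ extends to a smooth function on $\R^n$. For $v$ itself, $h'(r) = O(r^{1/(n-1)})$ as $r \to 0^+$, so $v(x) - v(0) = o(|x|)$; thus $v$ is differentiable at the origin with $\nabla v(0) = 0$, and $\nabla v$ is continuous there. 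Combined with the obvious smoothness of $v$ away from $0$, this shows $v \in C^1(\R^n)$.

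For (ii), from $V(x) = \psi(|x|)\, x/\omega_{n-1}$ the product rule yields
$$ \omega_{n-1}\, \diver V(x) = n\, \psi(|x|) + |x|\, \psi'(|x|). $$
Differentiating the identity $r^n \psi(r) = s(r)$ and using $s'(r) = \omega_{n-1}\, r^{n-1}\cdot 2^n/(1+r^2)^n$, then dividing by $r^{n-1}$, I obtain $n\psi(r) + r\psi'(r) = \omega_{n-1}\cdot 2^n/(1+r^2)^n$. Substituting into the previous display gives \eqref{eq:divV} for $x \ne 0$, and for $x = 0$ by continuity.

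The main obstacle I anticipate is the smoothness claim at the origin, specifically showing that $\psi(r) = s(r)/r^n$, which naively looks singular, extends to a $C^\infty$ function of $x$ through $0$. The rescaling trick absorbs the potential singularity into the bounded density of $\sigma_n$ near the origin, after which smoothness and evenness in $r$ are automatic. Everything else is then a routine one-variable bookkeeping exercise, and in fact one gets $V$ of class $C^\infty$ rather than merely $C^1$.
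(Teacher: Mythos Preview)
Your argument is correct and follows the same overall structure as the paper's proof: both derive the key identity $V(x) = \dfrac{\sigma_n(B(0,|x|))}{\omega_{n-1}\,|x|^n}\,x$ and then analyze the scalar coefficient $\psi(r)=\sigma_n(B(0,r))/r^n$ (the paper calls $\phi=\psi/\omega_{n-1}$). The details diverge in two places. For smoothness at the origin, the paper expands $\sigma_n(B(0,r))$ in a power series in $r$ to read off $\phi(r)=c_n(1+O(r^2))$, whereas your rescaling $t=ru$ exhibits $\psi$ directly as a parameter integral that is manifestly $C^\infty$ and even in $r$; this is slicker and immediately yields $V\in C^\infty$, not just $C^1$. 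For the divergence identity, the paper passes to the distributional formulation and evaluates $-\int V\cdot\nabla\varphi$ in polar coordinates by integration by parts, while you compute $\diver V$ pointwise via $\diver(\psi(|x|)x)=n\psi+r\psi'$ together with $s'(r)=\omega_{n-1}r^{n-1}\cdot 2^n/(1+r^2)^n$. Since you have already established $V\in C^1$, the pointwise computation is legitimate and more direct; the paper's distributional detour is unnecessary once $C^1$-regularity is in hand. Both routes are short, but yours avoids the polar-coordinate bookkeeping.
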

 The last identity   implies that $v$ is $n$-subharmonic and 
satisfies 
 \begin{equation}\label{eq:vdist}  \Delta_n v= \sigma_n 
\end{equation}
in the distributional sense. It follows from the definition of $v$  that $v$ is bounded on compact subset of $\R^n$ and that 
$v(x)\to +\infty$ as $x\to \infty\in \wR^n$.

\begin{proof} For $r>0$ we have  
\begin{equation}\label{eq:sigma} 
\sigma_n(B(0,r))=2^n \omega_{n-1} \int_0^r\frac{t^{n-1}}{(1+t^2)^n}\,dt.
\end{equation}
From this it easily follows that the function $h$ in the statement is $C^\infty$-smooth on $(0,\infty)$. This in turn implies that   $v$ is  $C^\infty$-smooth
on $\R^{n}\backslash \{0\}$.   In order to investigate the behavior of $v$ near $0$, we expand the integral in \eqref{eq:sigma} 
in a power series near $0$. Then 
for $r>0$ sufficiently close to $0$ we have 
\[\label{eq:sigexp} 
\sigma_n(B(0,r))=\frac{2^n}{n} \omega_{n-1} r^n (1+O(r^2)),
\]
where here and below $1+O(r^2)$ indicates a power series in $r^2$ that converges near  $0$. 

This and the definition of $h$ in turn imply that for $r\ge 0$ close to $0$ we have 
$$h(r)=a_n r^{n/(n-1)} (1+O(r^2)), \text{ and } h'(r)= b_n r ^{1/(n-1)} (1+O(r^2))$$
with  some constants  $a_n,b_n>0$. 
 Hence, $h^\prime$ exists and is continuous on $[0,\infty)$ with $h^\prime(0)=0$.
 Since  $v(x)=h(r)$ with $r=|x|$  for $x\in \R^n$, it easily follows that $v$ is differentiable at $x=0$ with $\nabla v(0)=0$ and that 
$$ \nabla v(x)=  \frac{h'(r)}{r}x $$ 
is a continuous function of  $x\in \R^n$. Hence $v$ is indeed $C^1$-smooth on $\R^n$. 

Note that 
\begin{equation}\label{eq:V}
V(x) = |\nabla v(x)|^{n-2}\nabla v(x) = \frac{h'(r)^{n-1}}{r}x
=\frac{\sigma(B(0,r))}{\omega_{n-1} r^{n}} x 
\end{equation} 
for $x\in  \R^n\backslash \{0\}$. 
This implies that $V$ is  $C^\infty$-smooth on 
$\R^n\backslash\{0\}$. To investigate the behavior of $V$ near $0$, we define 
$$\phi(r)\defeq \frac{\sigma(B(0,r))}{\omega_{n-1} r^n}.$$
Then by \eqref{eq:sigma}, for $r\ge 0$ near $0$ we have
$$ \phi(r)= c_n  (1+O(r^2))$$
with  some constant $c_n>0$. In particular, $x\mapsto \phi(|x|)$ is $C^1$-smooth (actually $C^\infty$-smooth) near $0$. 
Since $V(x)=\phi(r) x$, this implies that $V$ is  $C^1$-smooth near $0$, and so $C^1$-smooth on $\R^n$.

%


In order to show \eqref{eq:divV}, 
it is enough to verify the distributional version of this identity, namely
that 
$$ -\int  V \cdot \nabla \varphi =\int \varphi \, d\sigma_n $$  
for each $\varphi\in C^\infty_c(\R^n)$. 
Indeed,  if $\varphi \in C^\infty_c(\R^n)$ 
is arbitrary, then a computation in polar coordinates based on \eqref{eq:V} and \eqref{eq:sigma} shows that 
\begin{align*}
-\int V \cdot \nabla\varphi & = - \int_{\St^{n-1}}\biggl( \int_0^\infty
\frac{\sigma_n(B(0,r))}{\omega_{n-1}} \frac{\partial \varphi(r\xi)}{\partial r} \, dr \biggr) d \sigma_{n-1}(\xi)\\
&= \frac{1}{\omega_{n-1}}\int_{\St^{n-1}}\biggl( \int_0^\infty
\frac{d \sigma_n(B(0,r))}{dr}   \varphi(r\xi) \, dr \biggr) d \sigma_{n-1}(\xi)\\
&= \int_{\St^{n-1}}\biggl( \int_0^\infty
  \varphi(r\xi)\frac{2^n r^{n-1}}{(1+r^2)^n} \, dr \biggr) d \sigma_{n-1}(\xi) \\
 &=\int \varphi \, d \sigma_n.  
\end{align*} 
The identity \eqref{eq:divV} follows.   \end{proof}

If $A$ is a (real) $n\times n$-matrix, we denote the {\em adjunct matrix} of $A$ by 
$\adjoint A$. This is an $n\times n$-matrix, whose entries are given by 
$$ (\adjoint A)_{ij}= (-1)^{i+j}\det A_{ji}$$ 
for $i,j=1, \dots, n$, where $A_{ji}$ is the minor of $A$ obtained by deleting the $i$th column and the $j$th row  of $A$. Note that 
$$ A (\adjoint A)= (\det A) I_n, $$ 
where $I_n$ is the $n\times n$-unit matrix.

\begin{lemma}\label{lem:divergence} Let $\Om,\Om'\sub \R^n$ be
  open sets in $\R^n$, $f\: \Om \ra \Om'$ be a continuous map in $W^{1,n}_{loc}(\Om)$, and $V\: \Om' \ra \R^n$ be a $C^1$-smooth vector field on $\Om'$. Then 
\begin{equation}\label{eq:adjdiv}
\int_\Om [(\adjoint Df)(V\circ f)]\cdot \nabla \varphi =-\int_\Om \varphi [(\diver V)\circ f] J_f
\end{equation} for all $\varphi\in C_c^\infty(\Om)$. 
\end{lemma}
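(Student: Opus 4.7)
The plan is to first establish the identity in the smooth case $f\in C^2(\Om)$ by a pointwise divergence computation, and then use mollification to pass to general $f\in W^{1,n}_{loc}(\Om)$. The key algebraic ingredient in the smooth case is the Piola identity: for $C^2$-smooth $f$, each column of the adjugate matrix is divergence-free, i.e.,
$$\sum_{i=1}^n \partial_i (\adjoint Df)_{ij}=0 \qquad \text{for } j=1,\dots,n.$$
This is a consequence of the cofactor expansion of $\det Df$ together with the equality of mixed partials; it can be verified by an antisymmetric cancellation in the case $n=2$ and extended to arbitrary $n$ by an analogous calculation on each pair of columns of a minor.

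Granting Piola, I would set $W(x) = (\adjoint Df(x))(V(f(x)))$ and expand $\diver W$ by the product rule. The terms in which the derivative lands on $\adjoint Df$ vanish by Piola, while a chain-rule computation on the remaining terms produces
$$\diver W(x)=\sum_{j,k}(\partial_k V_j)(f(x))\,(Df(x)\cdot \adjoint Df(x))_{kj} = J_f(x)\,(\diver V)(f(x)),$$
since $Df\cdot \adjoint Df = (\det Df) I_n = J_f I_n$. Integration by parts against $\varphi\in C_c^\infty(\Om)$ then yields \eqref{eq:adjdiv} in the smooth case.

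For $f\in W^{1,n}_{loc}(\Om)$ only, I would fix $\varphi\in C_c^\infty(\Om)$, enlarge $\operatorname{supp}\varphi$ slightly to a compact $K\sub \Om$, and mollify to obtain smooth maps $f_\eps\ra f$ uniformly on $K$ and in $W^{1,n}(K)$. Since $f(\operatorname{supp}\varphi)$ is compact in the open set $\Om'$, the uniform convergence ensures $f_\eps(\operatorname{supp}\varphi)\sub \Om'$ for small $\eps$, so $V\circ f_\eps\ra V\circ f$ and $(\diver V)\circ f_\eps\ra (\diver V)\circ f$ uniformly on $\operatorname{supp}\varphi$. The entries of $\adjoint Df$ are homogeneous polynomials of degree $n-1$ in the partials of $f$ and $J_f$ is homogeneous of degree $n$, so H\"older turns strong $L^n$-convergence of $\nabla f_\eps$ into $L^{n/(n-1)}(K)$-convergence of $\adjoint Df_\eps$ and $L^1(K)$-convergence of $J_{f_\eps}$. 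These exponents pair exactly with the $L^\infty$ factors $\nabla\varphi$ and $\varphi$ and with the uniformly convergent compositions, so both sides of \eqref{eq:adjdiv} for $f_\eps$ pass to the limit.

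The main obstacle is this last passage: the adjugate forces the exponent $n/(n-1)$, which is precisely H\"older-conjugate to the $L^n$-integrability of the derivatives, so the integrabilities match exactly and one cannot relax the $W^{1,n}_{loc}$ hypothesis. A secondary point is Piola's identity itself --- while elementary for $C^2$ maps, its distributional version for $W^{1,n}$ maps is a more delicate fact (central in nonlinear elasticity), and I would deliberately avoid invoking it directly by using mollification as above.
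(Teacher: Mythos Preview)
Your proposal is correct and follows essentially the same route as the paper: establish the pointwise identity $\diver[(\adjoint Df)(V\circ f)]=[(\diver V)\circ f]J_f$ for smooth $f$ via the Piola identity, integrate by parts against $\varphi$, and then pass to general $f\in W^{1,n}_{loc}$ by mollification, using that the entries of $\adjoint Df$ and $J_f$ are polynomials of degree $n-1$ and $n$ in the partials so that $L^n$-convergence of $Df_\eps$ suffices. Your write-up is in fact slightly more explicit than the paper's in two respects: you note the need to ensure $f_\eps(\operatorname{supp}\varphi)\subset\Om'$ for small $\eps$ (which the paper glosses over), and you record the sharper $L^{n/(n-1)}$-convergence of $\adjoint Df_\eps$ rather than the paper's $L^1$.
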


Here $J_f =\det(Df)$ denotes the Jacobian of $f$ and  $\adjoint Df$  the matrix-valued function $x \mapsto  \adjoint Df(x)$. Both are
defined   almost everywhere on $\Om$.  Moreover,   
$(\adjoint Df)(V\circ f)$ is the $\R^n$-valued function obtained from  
pointwise multiplication of  the matrix $(\adjoint Df)(x)$ with the column vector $(V\circ f)(x)$. 
\begin{proof} Suppose first that  $f$ is $C^\infty$-smooth. If  we define 
$$W:= \varphi [(\adjoint Df)(V\circ f)] , $$ then $W$  is a $C^1$-smooth vector field on $\Om$ with compact support in $\Omega$, and so 
$$\int_\Om \diver W=0. $$ 
Now 
 \begin{equation}\label{eq:Wdiv}
 \diver  [(\adjoint Df)(V\circ f)]= [(\diver V)\circ f] J_f. 
 \end{equation} 
This is straightforward to establish  by direct computation if one uses the easily verified (and well-known) fact that  each column of the matrix function $\adjoint Df$, considered as a vector field, is  divergence free. 
We obtain $$\diver W= [(\adjoint Df)(V\circ f)]\cdot \nabla \varphi + \varphi [(\diver V)\circ f] J_f,$$
 and  \eqref{eq:adjdiv} follows if $f$ is $C^\infty$-smooth.

The general case can be derived from the smooth case by   an approximation argument. Namely, if  $f\in W^{1,n}_{loc}(\Om)$ we can find $C^\infty$-smooth maps  $f_k\: \Om\ra  \R^n$ such that on the support $K$ of $\varphi$
we have uniform convergence $f_k\to f$, and convergence $Df_k\to Df$ in $L^n(K)$. Then $J_{f_k}\to J_f$ and $\adjoint Df_k \to \adjoint Df$  in $L^1(K)$. By the first part of the proof,   \eqref{eq:adjdiv}
is true for $f_k$ with $k$ large; by taking the limit $k\to \infty$ on both sides, we obtain   \eqref{eq:adjdiv} for the map $f$.  \end{proof}

Using the language of differential forms, one can outline a more conceptual  way of verifying the crucial identity \eqref{eq:Wdiv}  as follows. We can identify the vector field $V$ with a $C^1$-smooth $(n-1)$-form 
$\alpha$ on $\Om'$ with the same $n$ components up to sign in standard coordinates on $\R^n$.  If $d$ denotes exterior differentiation of forms,  $\text{Vol}$ the standard volume form on $\R^n$, then we can choose these signs so that  
$$ d\alpha= (\diver V) \text{Vol}. $$ Moreover, if 
$f^*$ denotes the pull-back operation on forms by $f$, then 
$ f^*\alpha$ is an $(n-1)$-form whose coefficients in standard coordinates correspond to the components of the vector field $(\adjoint Df)(V\circ f)$. Since $f^*\circ d=d \circ f^*$ for smooth $f$, we obtain  
\begin{align*} \label{eq:forms}
\diver [(\adjoint Df)(V\circ f)]\text{Vol} &= d( f^*\alpha)=
   f^*(d\alpha)\\ &=   f^*((\diver V) \text{Vol} )\notag 
= [(\diver V)\circ f]  J_f \text{Vol}. \end{align*} 
 Equation \eqref{eq:Wdiv} follows.

\section{$\mathcal{A}$-harmonic and $\mathcal{A}$-subharmonic functions}

Pull-backs of $n$-harmonic or $n$-subharmonic functions by quasiregular maps are  not  of the same type in general, but one obtains functions that still satisfy a non-linear degenerate  elliptic equation of divergence type. This is well-known and the basis of the  potential-theoretic method  to  investigate  quasiregular maps.
Here we only discuss some basic facts relevant for  our approach. 

Let $\mathcal A\: \R^n \times \R^n\ra \R^n$ be a measurable map such that for almost every $p\in \R^n$ the map
$\xi \mapsto  \mathcal A(p, \xi)$ is defined on $\R^n$ and satisfies 
\begin{itemize}
\smallskip 
\item[\textnormal{(i)}] $\mathcal {A}(p, \lambda \xi)= |\lambda|^{n-2} \lambda \mathcal A(p,  \xi)$, 

\smallskip 
\item[\textnormal{(ii)}] $|\mathcal A(p,\xi)| \le  c_1 |\xi|^{n-1}$,  

\smallskip 
\item[\textnormal{(iii)}]   $\mathcal A(p,\xi) \cdot \xi  \ge  c_2 |\xi|^{n}$
\end{itemize}
for all $\xi\in \R^n$ and $\lambda\in \R$. 
Here $c_1$ and $c_2$ are positive constants independent of $p$ and $\xi$. 
These requirements are modeled on properties of the basic example  $\mathcal A(p, \xi)=|\xi|^{n-2} \xi$.

Let  $f\: \R^n \ra \R^n$ be  a $K$-quasiregular map. Then $Df(p)$ exists for almost $p\in \R^n$ and is an invertible linear map on $\R^n$. We identify $Df(p)$ with the Jacobi matrix (the matrix representation of $Df(p)$ with respect to the standard basis on $\R^n$). Then 
 \begin{equation}\label{def:G}
 G(p) := \det(Df(p))^{2/n} Df(p)^{-1} (Df(p)^{-1})^t 
  \end{equation}  
 is defined for almost every $p\in \R^n$. Here $A^{-1}$ and $A^t$ indicate the inverse and the transpose of a  matrix $A$, respectively.   Since $f$ is  a $K$-quasiregular map, we have 
 $$ \frac 1C |\xi|^2 \le G(p)\xi \cdot \xi \le C|\xi|^2 $$
 for almost every $p\in \R^n$ and all $\xi \in \R^n$,  where $C=C(n,K)>0$  only depends on $n$ and $K$. 
 
 If we define  
 \begin{equation}\label{def:A}  \mathcal {A}(p,  \xi)= (G(p)\xi \cdot \xi)^{(n-2)/2} G(p) \xi, 
 \end{equation} 
then $\mathcal{A}$ has the above properties (i)--(iii) with constants $c_1=c_1(n,K)$ and $c_2=c_2(n,K)$.

Suppose  $\mathcal{A}$ satisfying (i)--(iii) is given, and $u \in W^{1,n}_{loc}(\R^n)$. For ease of notation we write 
$\mathcal{A}_u$ for the almost everywhere defined  measurable function 
$x\in \R^n \mapsto  \mathcal{A}(x, \nabla u(x))$ with values in $\R^n$.
 We say that  $u\:\R^n\ra \R$  is 
$\mathcal{A}$-{\em subharmonic} if $u\in W^{1,n}_{loc}(\R^n) $ and if there exists a positive measure $\mu$ on $\R^n$ (the {\em Riesz measure} of $u$) 
such that 
\begin{equation}\label{eq:subweak}
 -\int \mathcal{A}_u \cdot \nabla\varphi = \int \varphi \, d \mu
 \end{equation}
for all $\varphi\in C_c^\infty(\R^n)$. In other words, $u$ satisfies the 
equation 
$$ \diver \mathcal{A}_u=\mu$$
in the distributional sense.  For our purposes it is actually enough   to  only consider continuous  $\mathcal{A}$-subharmonic functions.  A standard approximation argument shows that \eqref{eq:subweak} remains valid for all functions $\varphi \in C_c(\R^n)\cap W_{loc}^{1,n}(\R^n)$.

\begin{lemma}[Caccioppoli inequality] \label{lem:cacc}
Let $u\:\R^n \ra [0,\infty)$ be a non-negative continuous $\mathcal{A}$-subharmonic function.
Then
$$ \int  \varphi^n  |\nabla u|^n \le C \int u^n |\nabla \varphi|^n $$ 
for all non-negative $\varphi\in C_c^\infty(\R^n)$, where $C=C(n,c_1, c_2)>0$. \end{lemma}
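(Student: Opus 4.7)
\smallskip
\noindent\textbf{Proof plan.} The standard strategy is to test the distributional equation \eqref{eq:subweak} against the non-negative test function $\psi = \varphi^n u$. Since $\varphi\in C_c^\infty(\R^n)$ and $u$ is continuous and belongs to $W^{1,n}_{loc}(\R^n)$, the product $\psi$ lies in $C_c(\R^n)\cap W^{1,n}_{loc}(\R^n)$, so the remark preceding the lemma lets me use $\psi$ as a test function. Because $u\ge 0$ and $\mu$ is a positive measure, the right-hand side $\int \psi\,d\mu$ is non-negative, which yields
$$ \int \mathcal{A}_u\cdot \nabla(\varphi^n u)\le 0. $$

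Next I would expand $\nabla(\varphi^n u)=\varphi^n \nabla u + n\varphi^{n-1} u\,\nabla \varphi$ and separate the two resulting integrals. The ellipticity property (iii) gives $\mathcal{A}_u\cdot \nabla u\ge c_2|\nabla u|^n$ pointwise a.e., while the boundedness property (ii) gives $|\mathcal{A}_u|\le c_1 |\nabla u|^{n-1}$. Combining these with the Cauchy-Schwarz inequality for the mixed term produces
$$ c_2 \int \varphi^n |\nabla u|^n \le n c_1 \int \varphi^{n-1}|\nabla u|^{n-1}\, u\, |\nabla \varphi|. $$

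To the right-hand side I would apply H\"older's inequality with exponents $n/(n-1)$ and $n$, pairing $\varphi^{n-1}|\nabla u|^{n-1}$ with $u|\nabla\varphi|$, which gives
$$ c_2 \int \varphi^n |\nabla u|^n \le n c_1 \Bigl(\int \varphi^n |\nabla u|^n\Bigr)^{(n-1)/n}\Bigl(\int u^n|\nabla \varphi|^n\Bigr)^{1/n}. $$
Since $u$ is continuous and $\varphi$ has compact support, $\int \varphi^n |\nabla u|^n$ is finite, so I may divide and raise to the $n$th power to obtain the conclusion with $C=(nc_1/c_2)^n$.

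The only technical point that warrants care is the admissibility of $\psi=\varphi^n u$ as a test function; this is precisely what the approximation remark just before the lemma is meant to handle, and on the support of $\varphi$ the function $u$ is bounded by continuity, so $\psi$ has the required regularity and compact support. Everything else is a routine combination of the structural inequalities (ii), (iii) with H\"older.
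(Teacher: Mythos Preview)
Your proposal is correct and follows essentially the same route as the paper: test \eqref{eq:subweak} with $\psi=\varphi^n u$, use the positivity of $\mu$ to get $\int \mathcal{A}_u\cdot\nabla\psi\le 0$, expand the gradient, apply the structural bounds (ii) and (iii), and absorb via H\"older with exponents $n/(n-1)$ and $n$. Your explicit constant $C=(nc_1/c_2)^n$ and your remark on the admissibility of $\psi$ are both fine.
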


\begin{proof} In the following proof all implicit multiplicative constants are positive and only depend 
on  $n$, $c_1$, and $c_2$. 
If $\mu$ the the Riesz measure of $u$, then for each non-negative function $\psi\in 
  C_c(\R^n)\cap W_{loc}^{1,n}(\R^n)$ we have 
$$ \int \mathcal{A}_u\cdot \nabla\psi= -\int \psi \, d\mu  \le 0.$$

Now let $\varphi\in C_c^\infty(\R^n)$ with $\varphi\ge 0$ be arbitrary, and choose $\psi= \varphi^n u$ in the previous inequality. This is possible, because $u\ge 0$ and $u\in C_c(\R^n)\cap W_{loc}^{1,n}(\R^n)$. Then 
$$\nabla \psi = n u \varphi^{n-1} \nabla \varphi+  \varphi^n \nabla u, $$ 
and so 
\begin{align*} 
\int \varphi^n  \mathcal {A}_u\cdot \nabla u  &=
\int \mathcal {A}_u \cdot \nabla \psi - n
\int u\varphi^{n-1} \mathcal {A}_u\cdot \nabla  \varphi \\
&\le - n
\int u\varphi^{n-1} \mathcal {A}_u\cdot \nabla  \varphi \le 
n \int u\varphi^{n-1} |\mathcal {A}_u| \cdot |\nabla \varphi |. 
\end{align*}

Using this and the properties (ii) and (iii) of $\mathcal{A}$, we obtain
\begin{align*} 
0\le \int   \varphi^n |\nabla u|^n &\lesssim \int   
\varphi^n\mathcal {A}_u \cdot \nabla u \\
&\lesssim \int   u  \varphi^{n-1} |\mathcal {A}_u| \cdot  |\nabla \varphi |  \\
&\lesssim \int  u \varphi^{n-1}  |\nabla u|^{n-1}  |\nabla \varphi|  \\
&\le  \biggl(\int  \varphi^{n}  |\nabla u|^{n}\biggr)^{(n-1)/n} \biggl(\int u^n  |\nabla \varphi|^{n} \biggr)^{1/n}. \end{align*}
The desired inequality follows. 
\end{proof} 

\begin{lemma}[Growth controls  Riesz measure] \label{lem:Riesz<grow}Let $u\:\R^n \ra [0,\infty)$ be a non-negative continuous $\mathcal{A}$-subharmonic function
with Riesz measure $\mu$, and $B=B(a,r)$, where $a\in \R^n$ and $r>0$. Then 
\begin{equation}\label{eq:Riesz<grow}
 \mu(B) \le C\sup_{x\in 2B} u(x)^{n-1} ,
 \end{equation}  where $C=C(n,c_1, c_2)>0$. \end{lemma}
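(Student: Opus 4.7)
The plan is to run the standard cutoff/Caccioppoli argument, exploiting the fact that the exponents in the definition of $\mathcal{A}$-subharmonicity fit together in dimension $n$ when one uses a test function of the form $\varphi^n$.

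First I would fix a standard smooth cutoff $\varphi\in C_c^\infty(\R^n)$ with $0\le \varphi \le 1$, $\varphi\equiv 1$ on $B=B(a,r)$, $\operatorname{supp}\varphi\sub 2B$, and $|\nabla\varphi|\le C_0/r$ for some dimensional constant $C_0$. Using $\varphi^n\in C_c^\infty(\R^n)$ as a test function in \eqref{eq:subweak} (and noting that $\nabla(\varphi^n)=n\varphi^{n-1}\nabla\varphi$), one obtains
\[
\mu(B)\;\le\;\int \varphi^n\, d\mu \;=\; -n\int \varphi^{n-1}\,\mathcal{A}_u\cdot \nabla\varphi.
\]

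Next I would estimate the right-hand side. Property (ii) of $\mathcal{A}$ gives $|\mathcal{A}_u|\le c_1|\nabla u|^{n-1}$, and then H\"older's inequality with conjugate exponents $n/(n-1)$ and $n$ yields
\[
\mu(B)\;\lesssim\;\int \varphi^{n-1}|\nabla u|^{n-1}|\nabla\varphi|\;\le\;\biggl(\int \varphi^n|\nabla u|^n\biggr)^{(n-1)/n}\biggl(\int |\nabla\varphi|^n\biggr)^{1/n}.
\]
The second factor is harmless: $\int|\nabla\varphi|^n\lesssim r^{-n}\cdot \lambda_n(2B)\lesssim 1$. To control the first factor, I would invoke the Caccioppoli inequality (Lemma~\ref{lem:cacc}) with the same cutoff $\varphi$, giving
\[
\int \varphi^n|\nabla u|^n \;\lesssim\; \int u^n|\nabla \varphi|^n \;\le\; \bigl(\sup_{2B}u\bigr)^n \int|\nabla\varphi|^n \;\lesssim\; \bigl(\sup_{2B}u\bigr)^n,
\]
where in the middle step I use that $|\nabla\varphi|$ is supported in $2B$ and $u\ge 0$.

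Combining these two bounds gives $\mu(B)\lesssim (\sup_{2B}u)^{n-1}$, which is exactly \eqref{eq:Riesz<grow}, with constant depending only on $n,c_1,c_2$. I do not anticipate a real obstacle here; the one point to verify carefully is that $\varphi^n$ is an admissible test function (it is, being $C_c^\infty$) and that the H\"older exponents align so that Caccioppoli feeds directly into the Hölder estimate. The conceptual content is entirely that the cubic/quartic mismatch one might worry about is avoided precisely because the definition of $\mathcal{A}$-subharmonicity, the Caccioppoli inequality, and the test function $\varphi^n$ are all tuned to the same exponent $n$.
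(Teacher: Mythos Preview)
Your proposal is correct and follows essentially the same argument as the paper: the same smooth cutoff $\varphi$, the test function $\varphi^n$ in \eqref{eq:subweak}, H\"older with exponents $n/(n-1)$ and $n$, and then the Caccioppoli inequality (Lemma~\ref{lem:cacc}) to close the estimate. There is no substantive difference in strategy or execution.
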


 \begin{proof}  In the following,  all implicit multiplicative constants are again  positive and only depend 
on  $n$, $c_1$, and $c_2$. 

We can pick a function $\varphi \in C^\infty_c(\R^n)$ such that 
$$0\le \varphi  \le 1, \ \varphi |B= 1,\  \varphi  |\R^n\backslash  2B= 0, \ |\nabla \varphi |\lesssim 1/r. $$ 
Then $\displaystyle \int   |\nabla \varphi|^{n}\lesssim 1. $ 
Applying \eqref{eq:subweak} and the Caccioppoli inequality, we conclude that 
\begin{align*} 
\mu(B)&\le \int \varphi^n \, d\mu =-n\int \varphi^{n-1} \mathcal {A}_u\cdot   \nabla \varphi 
 \\
&\lesssim \int  \varphi^{n-1}   |\nabla u|^{n-1}  |\nabla \varphi|  \\
&\le  \biggl(\int \varphi^{n}  |\nabla u|^{n} \biggr)^{(n-1)/n} \biggl(\int     |\nabla \varphi|^{n} \biggr)^{1/n}\\
&
\lesssim  \biggl(\int \varphi^{n}  |\nabla u|^{n} \biggr)^{(n-1)/n} \\
&\lesssim  \biggl(\int  u^n |\nabla \varphi|^{n} \biggr)^{(n-1)/n} \\
&\le \biggl( \sup_{x\in 2B} u(x)^{n-1} \biggr)  \biggl(\int   |\nabla \varphi|^{n} \biggr)^{(n-1)/n} 
\lesssim  \sup_{x\in 2B} u(x)^{n-1}. 
\end{align*} \end{proof}

\begin{lemma} \label{lem:cacccon1}
Let $u\:\R^n \ra [0,\infty)$ be a non-negative   continuous  $\mathcal{A}$-subharmonic 
function. If $u$ is bounded, then it is a constant function. 
\end{lemma}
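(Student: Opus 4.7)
The plan is to derive a Liouville-type conclusion from the Caccioppoli inequality by choosing a cutoff function whose $n$-energy decays to zero. Since the $L^n$-norm of the gradient is conformally invariant in $\R^n$, the natural candidate is a logarithmic cutoff. Let $M:=\sup_{\R^n}u<\infty$, and for $R>1$ define the Lipschitz function
\begin{equation*}
\varphi_R(x)=\begin{cases} 1 & \text{if }|x|\le R,\\ 2-\dfrac{\log|x|}{\log R} & \text{if }R\le |x|\le R^2,\\ 0 & \text{if }|x|\ge R^2.\end{cases}
\end{equation*}
Then $\varphi_R\in C_c(\R^n)\cap W^{1,n}_{loc}(\R^n)$, $|\nabla \varphi_R(x)|=1/(|x|\log R)$ on the annulus $R\le|x|\le R^2$ and vanishes elsewhere, so a computation in polar coordinates gives
\begin{equation*}
\int_{\R^n}|\nabla \varphi_R|^n\,d\lambda_n=\frac{\omega_{n-1}}{\log^n R}\int_R^{R^2}\frac{dr}{r}=\frac{\omega_{n-1}}{\log^{n-1}R}\xrightarrow[R\to\infty]{}0.
\end{equation*}

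Next I would feed $\varphi_R$ into the Caccioppoli inequality of Lemma~\ref{lem:cacc}. Since that lemma is stated for $\varphi\in C_c^\infty(\R^n)$, I would either smooth $\varphi_R$ by a standard mollification (the resulting $L^n$ gradient bound is preserved in the limit) or, more directly, invoke the approximation remark following \eqref{eq:subweak} to observe that the proof of Lemma~\ref{lem:cacc} goes through verbatim for Lipschitz compactly supported $\varphi\ge 0$, because the test function $\psi=\varphi^n u$ still belongs to $C_c(\R^n)\cap W^{1,n}_{loc}(\R^n)$. Either way, one obtains
\begin{equation*}
\int_{B(0,R)}|\nabla u|^n\le \int_{\R^n}\varphi_R^n|\nabla u|^n\le C\int_{\R^n} u^n|\nabla \varphi_R|^n\le CM^n\,\frac{\omega_{n-1}}{\log^{n-1}R}.
\end{equation*}

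Letting $R\to\infty$ while keeping any fixed $R_0>0$ fixed first (so $B(0,R_0)\subset B(0,R)$ for large $R$), the right-hand side tends to zero, forcing $\int_{B(0,R_0)}|\nabla u|^n=0$ for every $R_0$. Hence $\nabla u=0$ almost everywhere in $\R^n$, and since $u$ is continuous and $\R^n$ is connected, $u$ must be constant. The only step requiring care is the justification that Lemma~\ref{lem:cacc} applies to the non-smooth cutoff $\varphi_R$; this is the technical pinch-point, but it is a routine density/mollification matter given the remark already made in the text about extending \eqref{eq:subweak} to test functions in $C_c(\R^n)\cap W^{1,n}_{loc}(\R^n)$.
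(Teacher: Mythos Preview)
Your proof is correct and follows essentially the same approach as the paper's: both feed the boundedness of $u$ into the Caccioppoli inequality and exploit the vanishing of the $n$-capacity of $\infty$ in $\R^n$ to force $\nabla u=0$ a.e. The only difference is that the paper simply cites this capacity fact as well known, whereas you make it explicit by constructing the logarithmic cutoff $\varphi_R$ and computing $\int|\nabla\varphi_R|^n=\omega_{n-1}/\log^{n-1}R\to 0$.
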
 

\begin{proof} If $u$ is bounded, then  
 by Lemma~\ref{lem:cacc} we can find  $C\ge 0$ such that 
$$ \int |\nabla u|^n \varphi^n \le C \int  |\nabla \varphi|^n $$ 
for all $\varphi\in C_c^\infty(\R^n)$. 
Now it is a well-known fact that for each $r\ge 0$ there exist functions  $\varphi\in C_c^\infty(\R^n)$ with $\varphi\ge 0$, $\varphi|B(0,r)=1$, and $\int  |\nabla \varphi|^n$ arbitrarily small
(because  the $n$-capacity of $\infty$ vanishes  in $\R^n$). This implies that 
$$\int_{B(0,r)}  |\nabla u|^n=0$$ for each $r\ge 0$, and so $\nabla u=0$ almost everywhere on $\R^n$. It follows that $u$ is equal to a  constant function (see \cite[Corollary 2.1.9]{Zi}). 
\end{proof} 

If $g\: \R^n\ra \R$ is any function, we use some obvious  notation for sets related to sub- and superlevels  of $g$. For example, if $a,b\in \R$ and $a\le b$, then 
$\{a\le g\le b\} \defeq \{x \in \R^n: a\le g(x)\le b\}$, $\{ g>b\} \defeq\{x\in \R^n: u(x)>b\}$, etc. 

We require the following fact for the superlevel sets of  an 
$\mathcal{A}$-sub\-har\-monic 
function. 

  \begin{lemma} \label{lem:cacccon2}
Let $u\:\R^n \ra [0,\infty)$ be a non-constant,  non-negative, and continuous
  $\mathcal{A}$-subharmonic 
function. Then for each $L\ge 0$ the open set $\{u>L\}$ has no bounded 
 components. 
\end{lemma}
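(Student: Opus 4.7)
The plan is to argue by contradiction: assume that $U$ is a bounded connected component of $\{u>L\}$, and build a compactly supported test function inside $U$ that forces $\nabla u=0$ almost everywhere on $U$. Since $U$ is a component of the open set $\{u>L\}$, it is open and connected. By continuity and the component property, $u=L$ on $\partial U$: any boundary point is a limit of points in $U$ (so $u\ge L$ there), yet it cannot belong to another component (since components of open sets are pairwise disjoint and open), so $u\le L$ there as well. The set $\partial U$ is nonempty because $U$ is bounded while $\R^n$ is connected.

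For each $\epsilon>0$ such that $V_\epsilon\defeq\{u>L+\epsilon\}\cap U$ is nonempty (which holds for all small $\epsilon$ since $\sup_U u>L$), define
\begin{equation*}
v_\epsilon(x)\defeq\begin{cases}(u(x)-L-\epsilon)^+&\text{if }x\in U,\\ 0 &\text{if }x\notin U.\end{cases}
\end{equation*}
The key technical point is admissibility: because $u=L$ on $\partial U$, the closure $\overline{V_\epsilon}$ is contained in $\{u\ge L+\epsilon\}$, which is disjoint from $\partial U$; hence $\overline{V_\epsilon}$ is a compact subset of the open set $U$. Consequently $v_\epsilon$ is continuous on $\R^n$, has compact support in $U$, and inside $U$ coincides with the positive part of a function in $W^{1,n}_{loc}$, so after extension by zero we have $v_\epsilon\in C_c(\R^n)\cap W^{1,n}_{loc}(\R^n)$ with $\nabla v_\epsilon=\nabla u\cdot\mathbf 1_{V_\epsilon}$ almost everywhere.

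Using $v_\epsilon$ in the weak formulation \eqref{eq:subweak} yields
\begin{equation*}
-\int_{V_\epsilon}\mathcal A(x,\nabla u)\cdot\nabla u\,dx=\int v_\epsilon\,d\mu\ge0,
\end{equation*}
since $v_\epsilon\ge0$ and $\mu$ is positive. On the other hand, ellipticity property~(iii) gives $\mathcal A(x,\nabla u)\cdot\nabla u\ge c_2|\nabla u|^n\ge0$, so the left-hand side is $\le 0$. Both sides therefore vanish, forcing $\nabla u=0$ almost everywhere on $V_\epsilon$. Letting $\epsilon\downarrow 0$ and using $U=\bigcup_{\epsilon>0}V_\epsilon$ shows $\nabla u=0$ almost everywhere on $U$.

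Since $U$ is open and connected, the standard fact cited in the proof of Lemma~\ref{lem:cacccon1} (\cite[Corollary 2.1.9]{Zi}) yields $u\equiv c$ on $U$ for some constant $c$; and $c>L$ because $u>L$ on the nonempty set $U$. By continuity, $u\equiv c$ on $\overline U$, which contradicts the earlier observation that $u=L$ on $\partial U\ne\emptyset$. The most delicate step is the admissibility argument for $v_\epsilon$—specifically, the separation $\overline{V_\epsilon}\cap\partial U=\emptyset$—which is precisely what allows the test function to be supported compactly inside $U$ without interference from the geometry of other components of $\{u>L\}$.
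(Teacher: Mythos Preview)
Your proof is correct and follows essentially the same strategy as the paper's: build a nonnegative test function equal to a truncation of $u$ on the bounded component and zero elsewhere, plug it into \eqref{eq:subweak}, and use ellipticity~(iii) to force $\nabla u=0$ there. The only difference is cosmetic: the paper uses $\varphi=(u-L)^+\mathbf 1_\Omega$ directly (with support $\overline\Omega$) and appeals to the ACL characterization to verify $\varphi\in W^{1,n}_{loc}$, whereas you introduce the $\epsilon$-shift so that $v_\epsilon$ has compact support strictly inside $U$, making the admissibility check cleaner, and then let $\epsilon\downarrow 0$.
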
 

\begin{proof} Let $L\ge 0$ be arbitrary. Then  the set $\{u>L\}$ is open, and it is non-empty by 
Lemma~\ref{lem:cacccon1}.
 We argue by contradiction and assume that 
$\{u>L\}$ has a (non-empty) bounded component $\Om$. Then $u|\partial \Om= L$.
Define $\varphi=\max\{(u-L), 0\}$ on $\Om$ and $\varphi=0$ on $\R^n\backslash  \Om$.  Then 
$\varphi$ is a non-negative  function in $C_c(\R^n)\cap W^{1,n}_{loc}(\R^n)$ with 
$\nabla \varphi=\nabla u$ on $\Om$ and $\nabla \varphi =0$  on $\R^n\backslash  \Om$
(since  $u$ is absolutely on almost every line, one easily checks that $\nabla \varphi$ defined in this way is indeed a distributional  gradient of $\varphi$; see \cite[Corollary 2.1.8]{Zi}).

 If we denote by $\mu$  the Riesz measure of $u$, then we can apply \eqref{eq:subweak}  for this function $\varphi$.
Hence 
$$
\int_\Om \mathcal{A}_u\cdot \nabla u =
\int \mathcal{A}_u\cdot \nabla\varphi =
 -\int \varphi \, d \mu\le 0. 
$$
On the other hand,
$$ \mathcal{A}_u\cdot \nabla u \ge c_2 |\nabla u|^n\ge 0$$ 
almost everywhere on $\R^n$.  This is only possible if $\nabla u(x)=0$ for almost every $x\in \Om$. Therefore,  $u$ is locally constant on $\Om$, and hence constant on $\Om$, because $\Om$ is open and connected. 
Since $u=L$ on $\partial \Om$, this implies that $u=L$ on $\Om$; but we know that $u>L$ on $\Om$. This is a contradiction.
\end{proof}

\section{Proof of the Rickman-Picard theorem} 
\label{sec:prrickpic}

We first provide a proof of Rickman's Hunting Lemma.  
\begin{proof}[Proof of Lemma~\ref{lem:hunt}] We can find a number $D=D(n)\in \N$ so that every ball $B\sub \R^n$ of radius $\rho>0$ can be covered by $D$ balls of radius  $ \rho/16$ centered in $B$
(this is true, because $\R^n$ is a ``doubling" metric space). 

  We argue by contradiction and assume that there is a constant $C>0$ such that for every ball $B\sub \R^n$ with $\mu(B)> C$ we have $\mu (8B)>D \mu(B)$.

Pick $x_0=0$. Since $\mu(\R^n)=\infty$ there exists 
$r>0$ such that for $B_0:=B(x_0, r)$ we have $\mu(B_0)>C$. 
Then by our hypotheses  $\mu(8B_0)> D \mu(B_0)$. By choice of $D$, the ball $8B_0$, which has radius $8r$, can be covered by $D$ balls of radius $r/2$ centered at points in  $8B_0$. Hence there exists $x_1\in \R^n$ with $|x_1-x_0|<8r$ 
such that 
$$ \mu (B(x_1, r/2))\ge \frac 1D \mu  (B(x_0, 8r))\ge \mu(B(x_0, r))> C. $$
Now we repeat the argument for $B_1:= B(x_1, r/2)$, and so on,  decreasing the radii of the balls by the factor $2$ in each step. In this way, 
 we obtain a sequence of points $x_k$, $k\in \N_0$, such that 
for all $k\in \N_0$ we have 
$$ |x_{k+1}-x_k|<  2^{3-k} r $$  and $\mu(B_k)>C$, where 
$B_k:=B(x_k, 2^{-k}r)$. 

The points $x_k$ form a Cauchy sequence and so there exists $x_\infty\in \R^n$ 
such that $x_k \ra x_\infty$ as $k\to \infty$. 
If $\delta>0$ is  arbitrary, then 
$B_k \sub B(x_\infty, \delta)$ for some  $k\in \N$, and so  
 $\mu(B(x_\infty, \delta))\ge \mu(B_k)>C$.
Hence 
$$ \mu(\{x_0\})=\lim_{\delta\to 0} \mu(B(x_\infty, \delta))\ge C>0. $$
Here we used that  $\mu(B(x_\infty, \delta))<\infty$ as follows from our hypotheses. 
We obtain a contradiction, because $\mu(\{x_0\})>0$, but $\mu$ has no atoms. 
\end{proof}

To set  up the proof of the Rickman-Picard Theorem,  we consider a fixed $K$-quasiregular map $f\: \R^n \ra \St^n=\wR^n$.  
We want to show that $f$ cannot omit a set of points  if their number is sufficienly large depending on $n$ and $K$. 
By considering $f$ followed by a rotation if necessary, we may assume that 
$\infty\in \wR^n$ is among the omitted values. So suppose $f\: \R^n \ra \R^n$ is a $K$-quasiregular 
map omitting the distinct values $a_1, \dots, a_q\in \R^n$. We want to derive a contradiction if $q$ is sufficiently large only depending on $n$ and $K$. 
Note that $f$ is $K$-quasiregular with the same $K$ independently of whether we equip the target $\R^n$ 
with the Euclidean metric or the restriction of the spherical metric on $\wR^n$
to $\R^n$. This follows from the conformal equivalence of these metrics. 

We consider the measure $A$ on $\R^n$ obtained by pulling back the spherical volume 
on $\R^n\sub \wR^n$ to $\R^n$ by $f$. More explicitly, 
\begin{equation}\label{eq:A}
dA(x)= \frac{2^n \det (Df(x))} {(1+|f(x)|^2)^n}\, d\lambda_n(x), 
\end{equation} 
where as before $Df(x)$ is the almost everywhere defined Jacobi matrix of $f$. 

In the following, $\mathcal{A}$ will be as in \eqref{def:A} with $G$ as in \eqref{def:G}. We want to use the omitted values to construct certain non-negative and continuous $\mathcal{A}$-subharmonic functions whose Riesz measure is equal to $A$.

For each $k=1, \dots, q$ we pick a rotation on $\St^n\cong \wR^n$ such that $R_k(a_k)=\infty$. 
Let $v$ be the function from Lemma~\ref{lem:auxfunc} and define $v_k:= v\circ R_k$. Then $v_k$ is a non-negative 
$C^1$-smooth function on 
$\R^n  \backslash  \{a_k\} \sub \St^n \backslash \{a_k\}$. Since $R_k$ is a conformal map and the spherical measure $\sigma_n$ is rotation-invariant, it follows that 
$v_k$ is $n$-subharmonic on $\R^n \backslash \{a_k\}$ with Riesz measure $\sigma_n$ (restriced to
$\R^n  \backslash  \{a_k\}$ to be precise). Actually, $|\nabla v_k|^{n-2}\nabla v_k$ is $C^1$-smooth and 
$$ \diver (|\nabla v_k(y)|^{n-2}\nabla v_k(y))= \frac{2^n } {(1+|y|^2)^n}$$
for $y\in \R^n  \backslash \{a_k\}$. 
Moreover,  $v_k$ is bounded  outside each neighborhood of $\{a_k\}$ with 
 $v_k(x)\to +\infty$  as $x\to a_k$ 
for  $k=1, \dots, q$. 

We  fix  $\delta>0$ such that the Euclidean balls $B(a_1, \delta), \dots, B(a_k, \delta)$ are pairwise disjoint. 
The behavior  of the functions $v_k$ near the singularities  $a_k$   implies  that if we choose $\delta$ small enough, then we can find a constant $L_0>0$ with the following property: if $u_k(y)>L_0$ for some $k=1, \dots, q$, then $y\in B(a_k, \delta)$. In particular, the sets 
$\{ v_k>L_0\}$, $k=1, \dots, q$,
are pairwise disjoint. 

 Now define $u_k:=v_k\circ f$. This is meaningful, because $f$ omits the value $a_k$. It follows from the definition of $L_0$ that 
the sets 
 \begin{equation}\label{L_0} 
\{ u_k>L_0\}, \quad k=1, \dots, q,
\end{equation}
are pairwise disjoint.

\begin{lemma}\label{lem:pullharm}Each function $u_k$ is an unbounded, non-nega\-tive, and continuous $\mathcal{A}$-sub\-har\-monic function with Riesz measure $A$. 
\end{lemma}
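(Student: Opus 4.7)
The plan is to establish the three asserted properties separately, with the bulk of the work going into the distributional identity \eqref{eq:subweak} with $\mu = A$; non-negativity and continuity will drop out immediately, and unboundedness will follow from Lemma~\ref{lem:cacccon1}.

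First I would dispatch the easy parts. Since $v_k = v \circ R_k \ge 0$ by Lemma~\ref{lem:auxfunc}, and $v_k$ is continuous on $\R^n \backslash \{a_k\}$ while $f$ is continuous and omits $a_k$, the composition $u_k = v_k \circ f$ is non-negative and continuous on $\R^n$. For Sobolev regularity, note that if $K \sub \R^n$ is compact, then $f(K)$ is a compact subset of $\R^n \backslash \{a_k\}$, so $\nabla v_k$ is bounded on $f(K)$, say by $M$. The chain rule gives $\nabla u_k(x) = Df(x)^t (\nabla v_k)(f(x))$ almost everywhere, whence $|\nabla u_k(x)| \le M \|Df(x)\|$ on $K$. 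Since $Df \in L^n(K)$, this places $u_k$ in $W^{1,n}_{loc}(\R^n)$.

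The central step is the algebraic identification of $\mathcal{A}_{u_k}$ with a pullback of the vector field $V_k \defeq |\nabla v_k|^{n-2} \nabla v_k$ under $f$, twisted by the adjunct of $Df$. Writing $\xi = \nabla u_k(x) = Df(x)^t \eta$ with $\eta = (\nabla v_k)(f(x))$, I would exploit the identity $((Df)^{-1})^t (Df)^t = I$ to get $G(x) \xi = J_f(x)^{2/n} Df(x)^{-1} \eta$ and $G(x) \xi \cdot \xi = J_f(x)^{2/n} |\eta|^2$. Substituting into \eqref{def:A} and using $\adjoint(Df) = J_f \cdot Df^{-1}$ collapses everything to
$$\mathcal{A}(x, \nabla u_k(x)) = \adjoint(Df(x)) \cdot V_k(f(x)).$$
With this match in hand, Lemma~\ref{lem:divergence} applies to $V = V_k$ on $\Om' = \R^n \backslash \{a_k\}$ (where $V_k$ is $C^1$-smooth as noted in the paragraph preceding the statement of this lemma) and yields
$$\int \adjoint(Df)(V_k \circ f) \cdot \nabla \varphi = -\int \varphi \, ((\diver V_k) \circ f) J_f \, d\lambda_n.$$
Rotation-invariance of $\sigma_n$ together with the conformal invariance of the $n$-Laplacian gives $(\diver V_k)(y) = 2^n/(1+|y|^2)^n$ on $\R^n \backslash \{a_k\}$, so by the explicit formula \eqref{eq:A} the right-hand side equals $-\int \varphi \, dA$. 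This is precisely \eqref{eq:subweak} with $\mu = A$.

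Finally, for unboundedness I would argue by contradiction. If $u_k$ were bounded, Lemma~\ref{lem:cacccon1} would force it to be constant, hence its Riesz measure would vanish; but $A$ is non-trivial, since $J_f > 0$ on a set of positive measure by the regularity theory of non-constant quasiregular maps, giving the contradiction. The only real obstacle in this plan is the algebraic reduction of $\mathcal{A}_{u_k}$ to $\adjoint(Df)(V_k \circ f)$; once this identification is in place, Lemmas~\ref{lem:auxfunc} and~\ref{lem:divergence} assemble the rest with no further work.
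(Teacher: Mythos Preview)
Your proposal is correct and follows essentially the same route as the paper: both compute $\nabla u_k = Df^t(\nabla v_k\circ f)$, reduce $\mathcal{A}_{u_k}$ algebraically to $\adjoint(Df)(V_k\circ f)$ via the definition of $G$, invoke Lemma~\ref{lem:divergence} together with the identity $\diver V_k = 2^n/(1+|y|^2)^n$ recorded just before the lemma, and then deduce unboundedness from Lemma~\ref{lem:cacccon1} and the nontriviality of $A$. Your treatment of the Sobolev regularity is slightly more explicit than the paper's, but otherwise the arguments coincide.
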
 

\begin{proof} Essentially,  this follows from the fact that $u=u_k$ is the pull-back of the non-negative and continuous $n$-subharmonic 
function $v=v_k$ which has Riesz measure $\sigma=\sigma_n$, and  $\sigma$ pulls back to $A$. A more general version of the statement is true for arbitrary  $n$-subharmonic functions. For the proof  one has to struggle with regularity issues. Here the argument is   easier, because 
we have sufficient smoothness. 

Consider  $u=v\circ f\ge 0$. It is clear that $u$ is continuous. Since $v$ is $C^1$-smooth and $f$ is in $W^{1,n}_{loc}(\R^n)$, the function $u$ has a weak derivative $\nabla u$ that is locally $L^n$-integrable. We can apply  the chain rule and obtain  
$$\nabla u (x) = Df(x)^t \nabla v(f(x)).$$ Here and in similar equations below this is understood to hold for a.e.\ $x \in \R^n$. Using (\ref{def:G}) we find that
\[
G(x)\nabla u(x)\cdot \nabla u (x)= \det(Df(x))^{2/n}|\nabla v(f(x))|^2.
\]
Inserting this  into  (\ref{def:A}), we obtain 
\[
\cA(x,\nabla u(x))= |\nabla v(f(x))|^{n-2}\det(Df(x))Df(x)^{-1}\nabla v(f(x)).
\]
This  can be rewritten as
\begin{align*}
\cA(x,\nabla u(x))&= \adjoint(Df(x))\left(|\nabla v(f(x))|^{n-2}\nabla v(f(x))\right).\\
&=  \adjoint(Df(x))\left((V\circ f)(x)\right), 
\end{align*}
where 
$$V:=|\nabla v|^{n-2}\nabla v.$$ 
It follows from from Lemma \ref{lem:auxfunc} that
$V$ is a $C^1$-smooth vector field, and that 
\begin{equation}\label{eq:diverV}
 (\diver V)(x)= \frac{2^n } {(1+|x|^2)^n}. 
 \end{equation} 

Now let $\varphi\in C_c^\infty(\R^n)$. Then based on  Lemma~\ref{lem:divergence} we conclude from  \eqref{eq:diverV} and \eqref{eq:A} that 
\begin{align*}-\int \cA_u \cdot \nabla\varphi&=
-\int  [ \adjoint(Df)(V\circ f)]\cdot \nabla \varphi\,  \\
&=\int \varphi  ((\diver V)\circ f) J_f = \int \varphi\, dA.
\end{align*}
So equation (\ref{eq:subweak}) is satisfied with $\mu=A$. Therefore, $u$ is a non-negative continuous $\cA$-subharmonic with Riesz measure $\mu=A$ as claimed.
Obviously, the function $u$ is non-constant (otherwise $\mu=A=0)$, and so it is unbounded by Lemma~\ref{lem:cacccon1}.  
\end{proof}

\begin{corollary}[Meeting superlevel sets] \label{cor:meettract} There exists a constant $C_0>0$ with the following property: if $B=B(a,r)$ with $a\in \R^n$ and $r>0$  is any  ball satisfying  $A(\tfrac12 B)\ge C_0$, then 
$$ B\cap \{u_k>3L_0\}\ne \emptyset $$
for all $k=1, \dots, q$.
\end{corollary}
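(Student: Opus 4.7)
The plan is to derive this corollary as a direct consequence of the ``growth controls Riesz measure'' inequality (Lemma~\ref{lem:Riesz<grow}) applied to each $u_k$. By Lemma~\ref{lem:pullharm}, each $u_k$ is a non-negative continuous $\mathcal{A}$-subharmonic function whose Riesz measure is precisely $A$. This is the key feature of the setup: although there are $q$ distinct functions $u_k$, they all share the \emph{same} Riesz measure $A$, so a single mass threshold will control all of them simultaneously.

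Concretely, I would apply Lemma~\ref{lem:Riesz<grow} to the ball $B' = \tfrac12 B$, for which $2B' = B$. This yields the inequality
\[
A(\tfrac12 B) \;\le\; C \sup_{x\in B} u_k(x)^{n-1},
\]
where $C = C(n, c_1, c_2) > 0$ is the constant from Lemma~\ref{lem:Riesz<grow}, and the constants $c_1, c_2$ depend only on $n$ and $K$. Now define
\[
C_0 \defeq C(3L_0)^{n-1} + 1.
\]
I would argue by contrapositive. Suppose $B \cap \{u_k > 3L_0\} = \emptyset$ for some $k$; then $u_k(x) \le 3L_0$ for every $x \in B$, and the displayed inequality above gives $A(\tfrac12 B) \le C(3L_0)^{n-1} < C_0$, contradicting the hypothesis $A(\tfrac12 B) \ge C_0$. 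Hence for this choice of $C_0$ the ball $B$ meets every superlevel set $\{u_k > 3L_0\}$, $k = 1,\dots, q$, as required.

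There is no real obstacle here; the entire content is the observation that the pulled-back functions $u_k$ share the Riesz measure $A$, so the single-function estimate of Lemma~\ref{lem:Riesz<grow} already captures all $q$ of them uniformly. The only mild bookkeeping is the factor of $2$: one must apply the lemma at $\tfrac12 B$ rather than $B$ so that the supremum appears over $B$ itself, matching the set where we want to locate a point with $u_k > 3L_0$. Note that $C_0$ depends on $n$, $K$, and the constant $L_0$ (which in turn depends on the configuration of omitted values through the separation $\delta$), but crucially it is independent of the ball $B$, which is exactly what the statement asks for.
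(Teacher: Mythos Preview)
Your proof is correct and follows essentially the same approach as the paper: apply Lemma~\ref{lem:Riesz<grow} to each $u_k$ (which by Lemma~\ref{lem:pullharm} has Riesz measure $A$) on the ball $\tfrac12 B$, so that the supremum appears over $B$, and then choose $C_0$ large enough in terms of $C$ and $L_0$. The paper's proof is simply a terser version of exactly this argument.
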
 

In other words, if a ball has sufficiently large $A$-mass, then the twice larger ball with the same center  meets each superlevel set 
$\{u_k>3L_0\}$.  In general, the constant $C_0$ will not only depend on $n$ and $K$, but on other data (such as $L_0$  and the points 
$a_1, \dots, a_q$).  

\begin{proof} An inequality as in Lemma~\ref{lem:Riesz<grow} holds for each function $u_k$ with $\mu=A$ and  a constant $C$ only depending on $n$ and $K$.  So if the  
$A$-mass of  $\tfrac 12 B$ exceeds a large enough constant $C_0$  (independent of the ball), then for each of the functions $u_k$ there exists a point $x\in B$ with $u_k(x)>3L_0$. The claim follows. \end{proof} 

The inequality in Lemma~\ref{lem:Riesz<grow} says that the growth of an $\mathcal{A}$-sub\-har\-monic function 
controls its Riesz measure. One can also prove a similar  inequality
in the other direction. There seems to be no simple  proof of this fact for general 
$\mathcal{A}$-subharmonic functions. We will only need this for our functions
$u_1, \dots , u_q$, where one can prove a related inequality by a  rather simple argument.   To set this up, we fix $a\in \R^n$,  $r>0$, $t\ge 2$, and let 
$B=B(a,r)$ and $R=  \{ x\in \R^n: r<|x-a|<tr\}$. 
For $k=1, \dots, q$ we define 
\begin{equation}\label{M_k}
\displaystyle M_k:= \sup_{x\in B}u_k(x).
\end{equation} and 
 \begin{equation}\label{S_k}S_k  :=R\cap \{M_k/3< u_k <2M_k/3\}. 
 \end{equation} Then $\{u_k\ge M_k\} \cap \overline B\ne \emptyset$. 

If $A(\frac 12 B)\ge C_0$, where $C_0$ is the constant  in Corollary~\ref{cor:meettract}, then  $M_k/3\ge L_0$ for each $k=1,\dots, q$. This implies that $S_k\sub \{u_k>L_0\}$ and so by  \eqref{L_0}  the 
sets $S_k$, $k=1, \dots, q$, are pairwise disjoint. 
 
We consider $S_k$ as a condenser in the ring domain   $R$ with the complementary 
sets $$ S'_k :=R\cap \{ u\ge   2 M_k/3\}\text{ and }   S''_k :=R\cap 
\{  u\le   M_k/3\}. $$
See Figure~\ref{fig} (and the proof of Lemma~\ref{lem:capest} for more discussion).

We define the capacity of $S_k$ (for given $R$) as 
$$ \CAP(S_k) := \inf \biggl\{ \int_{S_k} |\nabla \psi|^n: \psi\in C^\infty_c(\R^n), 
\psi|S'_k\ge 1, \, \psi|  S''_k \le 0 \biggr\}. $$  
Based on an approximation argument, one can here replace 
the class $C^\infty_c(\R^n)$ of test functions  by the larger class of all 
functions in 
$W^{1,n}_{loc}(\R^n)$ with compact support. If $\varphi \in C_c^\infty (\R^n)$ and $\varphi|tB=1$, then   
$\psi=(3u_k/M_k-1)\varphi$  lies in the latter class. It follows that 
\begin{equation}\label{eq:capest}
 \frac{M_k^n}{3^n} \CAP(S_k)\le \int_{S_k} |\nabla u_k|^n.   
  \end{equation} 
  
  \begin{figure}[h!]
\begin{center}
\begin{tikzpicture}[thick, scale=0.6]

\def\ringa{(0,0) circle (2cm) (0,0) circle (6cm)}
\def\ringb{ (-4.5,6) parabola bend (0,0.5) (4.5,6) to (-7,6) parabola bend (0,-0.5) (7,6)}

\begin{scope}[even odd rule]

\clip \ringa;

\fill[fill=gray!50] \ringb;

\end{scope}

\draw[thick][thick] (0,0) circle (2cm);

\draw[thick] (0,0) circle (6cm);

\draw[thick] (-3,6) parabola bend (0,2) (3,6);

\draw[thick] (-4.5,6) parabola bend (0,0.5) (4.5,6);
\draw[thick] (-7,6) parabola bend (0,-0.5) (7,6);

\draw[thick] (0,4) node {$u_k>M_k$};
\draw[thick] (1,-1) node {$B$};
\draw[thick] (3.9,-3.9) node {$R$};
\draw[thick] (2.9,4) node {$S^\prime_k$};
\draw[thick] (3.6,2.2) node {$S_k$};
\draw[thick] (4.2,0.3) node {$S^{\prime\prime}_k$};
\end{tikzpicture}
\end{center}
\caption{The condenser $S_k$.}\label{fig}
\end{figure}
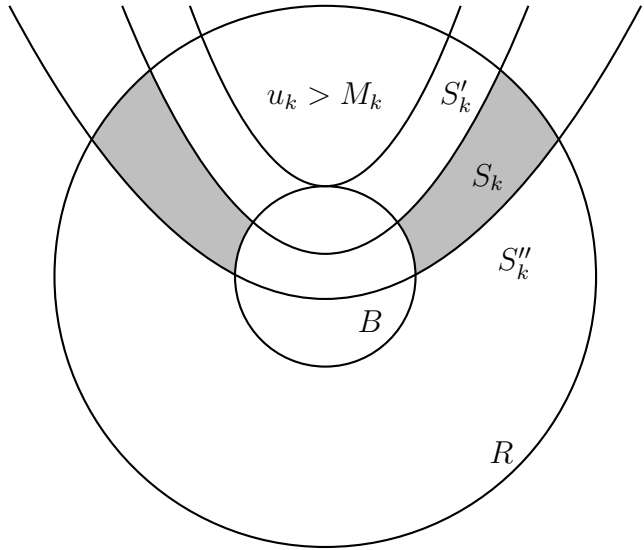

 The sets $S_1, \dots, S_q$ are pairwise 
disjoint, and so they  are crowded together in the ring  $R$.
It is not hard to see that that if $q$ is large, then there exists $k\in \{ 1, \dots, q\}$ such that $\CAP(S_k)>>1$. Actually, we will record a corresponding capacity estimate that gives an essentially optimal bound.

\begin{lemma}[Capacity estimate] \label{lem:capest} With the setup as above, there exists a constant $C_0>0$ such that if $q\ge 2$
and $ A(\frac 12 B)> C_0$, then 
 $$\sum_{k=1}^q  \CAP(S_k) \ge c \log(t)  q^{n/(n-1)}, $$ 
 where $c=c(n)>0$.  
\end{lemma}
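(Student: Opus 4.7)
\emph{Plan.} The plan is to reduce to a per-slice $n$-capacity bound on the sphere via conformal reduction to a cylinder. First, I would identify for each $k$ a \emph{tract} $T_k$: since $u_k$ is non-constant, non-negative, continuous, and $\mathcal{A}$-subharmonic by Lemma~\ref{lem:pullharm}, Lemma~\ref{lem:cacccon2} implies that $\{u_k > 2M_k/3\}$ has no bounded component. The component containing a near-maximizer of $u_k$ in $\overline B$ is therefore unbounded, so it contains a connected sub-continuum $T_k \subset \overline R$ meeting both boundary spheres of $R$. The hypothesis $A(\tfrac12 B) > C_0$ forces $M_k \geq 3L_0$ (via Corollary~\ref{cor:meettract}), so $T_k \subset \{u_k > L_0\}$, and the pairwise disjointness of the sets $\{u_k > L_0\}$ transfers to the tracts.

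\emph{Conformal reduction.} Next, I would apply the log-polar change of variables $(s, \xi) \mapsto a + re^s\xi$, which identifies $R$ conformally with the cylinder $C := \St^{n-1} \times (0, L)$, $L := \log t$. Conformal invariance of the $n$-Dirichlet integral lets me compute $\CAP(S_k)$ in $C$. Each tract $T_k$ becomes a connected set crossing from $\{s=0\}$ to $\{s=L\}$, and for each $s\in(0,L)$ the sets $E_k(s) := T_k \cap \Sigma_s$ are disjoint subsets of $\St^{n-1}$ of positive measure $\beta_k(s)$, with $\sum_k \beta_k(s) \leq \omega_{n-1}$. For any $\psi_k$ admissible for $\CAP(S_k)$, normalized so that $\psi_k \equiv 1$ on $S'_k$ and $\psi_k \equiv 0$ on $S''_k$ (so $\nabla \psi_k$ is supported in $S_k$), the decomposition $|\nabla \psi_k|^2 = |\partial_s \psi_k|^2 + |\nabla_\Sigma \psi_k|^2$ in the cylindrical metric, together with Fubini, yields
\[
\CAP(S_k) \;\geq\; \int_0^L \int_{\Sigma_s} |\nabla_\Sigma \psi_k|^n \, d\sigma\, ds,
\]
with $\psi_k|_{\Sigma_s} \geq 1$ on $E_k(s)$ and $\psi_k|_{\Sigma_s} \leq 0$ on $\bigcup_{j \neq k} E_j(s)$.

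\emph{Per-slice sum bound and completion.} The problem then reduces to the following per-slice estimate: for pairwise disjoint $E_1, \dots, E_q \subset \St^{n-1}$ with $\sum_k \sigma_{n-1}(E_k) \leq \omega_{n-1}$, and functions $w_k$ with $w_k \geq 1$ on $E_k$ and $w_k \leq 0$ on $\bigcup_{j \neq k} E_j$,
\[
\sum_{k=1}^q \int_{\St^{n-1}} |\nabla w_k|^n \, d\sigma \;\geq\; c_n\, q^{n/(n-1)}.
\]
Granted this, integrating over $s$ and summing over $k$ yields $\sum_k \CAP(S_k) \geq c_n L q^{n/(n-1)} = c_n \log(t)\, q^{n/(n-1)}$, as required. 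I would expect the per-slice bound itself to arise from a per-condenser inequality of the form $\int |\nabla w_k|^n \gtrsim g_k^{-1/(n-1)}$, where $g_k$ measures the gap separating $E_k$ from its nearest neighbor among the $E_j$'s, together with Jensen's inequality applied to the gap sizes (which sum to at most $\omega_{n-1}$), using the convexity of $x \mapsto x^{-1/(n-1)}$.

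\emph{Main obstacle.} The hardest part will be establishing the sharp per-slice sum estimate with the correct exponent $n/(n-1)$. The direct Sobolev embedding $W^{1,n}(\St^{n-1}) \hookrightarrow C^{0,1/n}$ only yields $\sum_k \int |\nabla w_k|^n \gtrsim q$, and hence $\sum_k \CAP(S_k) \gtrsim qL$, which is off by a factor of $q^{1/(n-1)}$; capturing the additional $q^{1/(n-1)}$ demands a quantitative Loewner-type inequality on $\St^{n-1}$ derived from first principles (via the isoperimetric inequality on the sphere and the coarea formula), which will constitute the real technical content of the lemma.
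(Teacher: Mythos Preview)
Your overall architecture---slice the annulus by concentric spheres, prove a per-slice lower bound, and integrate---is exactly what the paper does (the log-polar change to a cylinder is cosmetic). However, your proposed mechanism for the per-slice bound has a genuine gap, and your diagnosis of the obstacle is off.

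\medskip
\textbf{Why the gap argument fails.} You want $\int_{\St^{n-1}}|\nabla w_k|^n \gtrsim g_k^{-1/(n-1)}$ with $g_k$ the \emph{distance} from $E_k$ to its nearest neighbour, and then to apply Jensen using $\sum_k g_k \le \omega_{n-1}$. But gap \emph{distances} on $\St^{n-1}$ do not sum to a dimensional constant when $n\ge 3$: for $q$ well-separated points one has $g_k\sim q^{-1/(n-1)}$, so $\sum_k g_k \sim q^{(n-2)/(n-1)}\to\infty$. The Jensen step therefore breaks down. (Also, Sobolev actually gives the stronger bound $\int|\nabla w_k|^n\gtrsim g_k^{-1}$, not $g_k^{-1/(n-1)}$, but this does not rescue the summation.)

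\medskip
\textbf{What the paper does instead.} The paper uses precisely the Sobolev embedding $W^{1,n}(\Sigma(\rho))\hookrightarrow C^{0,1/n}$ that you dismiss, but applies it at the \emph{closest} pair $x_\rho\in S'_k\cap\Sigma(\rho)$, $y_\rho\in S''_k\cap\Sigma(\rho)$, obtaining $\int_{S_k\cap\Sigma(\rho)}|\nabla\psi|^n \gtrsim |x_\rho-y_\rho|^{-1}$. The key geometric observation you are missing is this: by minimality of $|x_\rho-y_\rho|$, the open set $S_k\cap\Sigma(\rho)$ contains a spherical cap of radius comparable to $|x_\rho-y_\rho|$ (centred near the geodesic midpoint, which lies at distance $<|x_\rho-y_\rho|$ from both $S'_k$ and $S''_k$, hence in neither). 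This yields $|x_\rho-y_\rho|^{n-1}\lesssim \sigma_{n-1}(S_k\cap\Sigma(\rho))$, converting the distance-based Sobolev bound into the \emph{measure}-based inequality
\[
\int_{S_k\cap\Sigma(\rho)}|\nabla\psi|^n \;\gtrsim\; \sigma_{n-1}\bigl(S_k\cap\Sigma(\rho)\bigr)^{-1/(n-1)}.
\]
Now the quantities on the right \emph{do} behave under summation: since the $S_k$ are pairwise disjoint, $\sum_k \sigma_{n-1}(S_k\cap\Sigma(\rho))\le \sigma_{n-1}(\Sigma(\rho))\lesssim \rho^{n-1}$, and H\"older's inequality (your Jensen step, but with measures rather than distances) gives $\sum_k \sigma_{n-1}(S_k\cap\Sigma(\rho))^{-1/(n-1)} \gtrsim q^{n/(n-1)}/\rho$. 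Integrating over $\rho\in(r,tr)$ finishes the proof. No Loewner estimate, isoperimetric inequality, or coarea formula is needed; the ``real technical content'' is the one-line cap observation linking the gap to the measure of $S_k$.
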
  

\begin{proof} In this proof all implicit multiplicative constants in inequalities of the form $X\lesssim Y$ only depend on $n$.

By Corollary~\ref{cor:meettract} we can find a constant $C_0>0$ independent of the ball $B=B(a,r)$ such that if 
$ A(\frac 12 B)> C_0$, then the superlevel sets $\{u_k >  M_k/3\}\sub \{u_k>L_0\}$, $k=1, \dots, q$,  are pairwise disjoint. We remark here for later use  that this last statement (and hence the rest of the argument) is also true without the assumption $ A(\frac 12 B)> C_0$ if $r$ is large enough,  because the functions $u_k$ are unbounded and so 
 $M_k/3\ge L_0$ for all $k=1, \dots, q$, if $r>0$ is large. 
 
 By Lem\-ma~\ref{lem:cacccon2}   no component of 
 $\{u_k > 2M_k/3\}$
is bounded. Since  $\{u_k > 2M_k/3\} \supseteq \{u_k\ge  M_k\}$  meets  $\overline B(a,r)$, 
it follows that $S'_k=R\cap \{u_k \ge   2M_k/3\}$  
has non-empty intersection with  each  sphere
$$\Sigma(\rho):= \{ x \in \R^n: |x-a|=\rho\}$$ for 
$r<\rho<tr$.  Now 
$$ S''_k =R\cap \{u_k \le   M_k/3\} \supseteq S'_l $$ for each $l\ne k$, and so $S''_k$ also meets each  sphere
$\Sigma(\rho)$  with 
$r<\rho<tr$ (here it is important that $q\ge 2$ which implies that there exists $l\in \{1, \dots, k\}$ with $k\ne l$).

Now fix $k\in \{1, \dots, q\}$ for the moment, and let $u=u_k$, $M=M_k$, $S=S_k$, $S'=S'_k$, and $S''=S''_k$.  By what we have seen,  for each  fixed $\rho\in (r,tr)$ we can  pick $x_\rho\in S'\cap \Sigma(\rho)$ and $y_\rho\in S''\cap \Sigma(\rho)$. We may assume that $|x_\rho-y_\rho|$ is minimal among all such points.
Then $S\cap  \Sigma(\rho)$ contains a spherical cap  of radius comparable to $|x_\rho-y_\rho|$. 
Hence 
\begin{equation}\label{eq:surfvol}
|x_\rho-y_\rho|^{n-1} \lesssim \sigma_{n-1} (S\cap \Sigma(\rho)). 
\end{equation}

Let $\psi \in C^\infty_c(\R^n)$ be a test function for the condenser $S$, and let 
$\widetilde \psi:= \min\{ 1, \max\{ \psi, 0\}\}$. Then   
$\nabla \widetilde \psi(x) = \nabla \psi(x)  $ for a.e.~$x\in S$ and 
$\nabla \widetilde \psi(x)=0$ for a.e.\ $x\in R\backslash  S$. Hence by Fubini we have 
$$ \int_{\Sigma(\rho)} |\nabla \widetilde \psi|^n\, d\sigma_{n-1} =  \int_{S\cap \Sigma(\rho)} |\nabla  \psi|^n \, d\sigma_{n-1} $$ for a.e.\ $\rho\in (r,tr)$. 

If we apply the Sobolev embedding theorem for the supercitical exponent $p=n$ 
on the $(n-1)$-dimensional sphere $ \Sigma(\rho)$ to the function 
$\tilde \psi$, then we conclude that 
$$ 1\le \widetilde\psi(y_\rho)-\widetilde \psi(x_\rho) \lesssim |x_\rho-y_\rho|^{1-(n-1)/n} 
\biggl(     
 \int_{S\cap \Sigma(\rho)} |\nabla  \psi|^n \, d\sigma_{n-1}
\biggr)^{1/n},   $$ 
and so by  \eqref{eq:surfvol} we obtain   that 
$$\frac{1}{\sigma_{n-1}(S\cap \Sigma(\rho))^{1/(n-1)}}\lesssim 
\int_{S\cap \Sigma(\rho)} |\nabla  \psi|^n \, d\sigma_{n-1} $$
for a.e.\ $\rho \in (r,tr)$. Integrating over $(r,tr)$ we arrive at 
$$  \int_r^{tr} \frac{d\rho }{\sigma_{n-1}(S\cap  \Sigma(\rho))^{1/(n-1)}}\lesssim \int_S |\nabla  \psi|^n,$$
and taking the infimum over all test function $\psi$ we conclude 
\begin{equation}\label{eq:ahlforsbd}
 \int_r^{tr} \frac{d\rho }{\sigma_{n-1}( S\cap\Sigma(\rho))^{1/(n-1)}}\, d\rho\lesssim \CAP(S). 
\end{equation}

This estimate holds for all the condensers $S_1, \dots, S_q$. Since they are pairwise disjoint, we have 
$$ \sum_{k=1}^q \sigma_{n-1}(S_k\cap \Sigma(\rho ))\le 
\sigma_{n-1}(\Sigma(\rho)) \lesssim \rho^{n-1}
$$ for all $\rho \in (r, tr)$.
Now by H\"older's inequality, 
$$
q\le \biggl(   \sum_{k=1}^q 
  \frac{1}{\sigma_{n-1}( \Sigma(\rho)\cap S_k)^{1/(n-1)}}\biggr)^{(n-1)/n} 
\biggl(\sum_{k=1}^q \sigma_{n-1}( \Sigma(\rho)\cap S_k)\biggr)^{1/n}, 
$$ and so 
$$ q^{n/(n-1)}/\rho \lesssim 
 \sum_{k=1}^q 
  \frac{1}{\sigma_{n-1}( \Sigma(\rho)\cap S_k)^{1/(n-1)}}.
  $$
If  we integrate  over $(r,tr)$
with respect to $\rho$, then  the claim  follows from \eqref{eq:ahlforsbd}.  
\end{proof}

The next result will be used in two ways, either when the value of the parameter $t$ tends to infinity or  when $t=2$.

\begin{lemma}[Riesz measure controls growth]\label{lem:grow<Riesz} 
Let $a\in \R^n$, $r>0$,  $t\ge 2$,  $B=B(a,r)$, and $R=  \{ x\in \R^n: r<|x-a|<tr\}$.     For some  $k\in \{1, \dots, q\}$,  set  $u=u_k$,  $M=\displaystyle \sup_{x\in B} u(x)$, and 
$$S=R\cap \{M/3<u<2M/3\}.$$  Then   we have 
\begin{equation}\label{eq:Capest}
( \CAP(S)-C_1)\sup_{x\in B} u(x)^{n-1}    \le C_2A(2tB),
\end{equation}
where $C_1=C_1(n,K)>0$ and $C_2=C_2(n,K)>0$.  
\end{lemma}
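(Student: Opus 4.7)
The plan is to reduce the lemma to an energy bound of the form
\[
\int_S |\nabla u|^n \le C_1' M^n + C_2' M A(2tB),
\]
with $C_1', C_2'$ depending only on $n$ and $K$. Combined with \eqref{eq:capest}, which gives $\CAP(S) M^n/3^n \le \int_S |\nabla u|^n$, and dividing by $M$, this yields $\CAP(S) M^{n-1} \le 3^n C_1' M^{n-1} + 3^n C_2' A(2tB)$, proving the lemma with $C_1 = 3^n C_1'$ and $C_2 = 3^n C_2'$.

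To produce the energy bound, I would test the $\mathcal{A}$-subharmonic equation \eqref{eq:subweak} satisfied by $u$ against
\[
\phi := (2M/3 - u)^+ \varphi^n,
\]
where $\varphi \in C_c^\infty(2tB)$ is a smooth cutoff with $\varphi \equiv 1$ on $tB$ and $|\nabla \varphi| \lesssim 1/(tr)$, so that $\int |\nabla \varphi|^n$ is bounded by a constant depending only on $n$. The function $\phi$ is non-negative, compactly supported, and lies in $W^{1,n}_{\mathrm{loc}}(\R^n)$, hence is admissible in the extended version of \eqref{eq:subweak}. Its decisive feature is that the factor $(2M/3 - u)^+$ vanishes on $\{u \ge 2M/3\}$ and is uniformly bounded above by $2M/3$, thereby confining every integrand that arises to the region $\{u < 2M/3\}$, where $u$ is controlled.

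Computing $\nabla \phi = -\mathbf{1}_{\{u<2M/3\}} \varphi^n \nabla u + n(2M/3 - u)^+ \varphi^{n-1} \nabla \varphi$ and substituting into $-\int \mathcal{A}_u \cdot \nabla \phi = \int \phi \, dA$, one rearranges the identity as
\[
\int_{\{u<2M/3\}} \varphi^n \mathcal{A}_u \cdot \nabla u = n \int (2M/3 - u)^+ \varphi^{n-1} \mathcal{A}_u \cdot \nabla \varphi + \int \phi \, dA.
\]
Property (iii) bounds the left-hand side below by $c_2 I$, where $I := \int_{\{u<2M/3\}} \varphi^n |\nabla u|^n$. The term $\int \phi \, dA$ is bounded by $(2M/3) A(2tB)$, since $\phi \le (2M/3)\mathbf{1}_{2tB}$. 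Property (ii), the pointwise estimate $(2M/3 - u)^+ \le 2M/3$, and H\"older's inequality bound the remaining right-hand term by $C(n,K)\, M\, I^{(n-1)/n}$; crucially, the truncation forces H\"older to produce precisely $I^{(n-1)/n}$ rather than an integral of $|\nabla u|^n$ over a larger, uncontrolled set. A standard application of Young's inequality then absorbs this term into $\tfrac{c_2}{2} I + C'(n,K) M^n$, yielding $I \le C_1' M^n + C_2' M A(2tB)$. Since $\varphi = 1$ on $S$ and $S \subset \{u < 2M/3\}$, we have $\int_S |\nabla u|^n \le I$, which completes the energy bound.

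The main obstacle is the choice of test function. Natural candidates such as $\phi = \eta(u)\varphi^n$ with $\eta(u) = (u - M/3)^+ \wedge (M/3)$ fail because $\eta$ equals $M/3$ on $\{u \ge 2M/3\}$, producing a boundary term containing $|\nabla u|^{n-1}$ on that unbounded region, where no \emph{a priori} control is available and the Young absorption step cannot close. The descending truncation $(2M/3 - u)^+$ fixes the argument precisely because it vanishes on $\{u \ge 2M/3\}$ and so pins every gradient integrand to the region where $u$ is bounded by $2M/3$.
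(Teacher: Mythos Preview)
Your proof is correct and follows essentially the same approach as the paper's. The paper also tests \eqref{eq:subweak} against a descending truncation times a cutoff, namely $\varphi=(M-u)_+\psi^n$ (truncating at level $M$ rather than $2M/3$), obtains the same inequality $I\lesssim MA(2tB)+M\,I^{(n-1)/n}$, and closes it via the elementary fact that $\alpha\le\beta+\gamma\alpha^{(n-1)/n}$ implies $\alpha\le\max\{2\beta,2^n\gamma^n\}$ rather than Young's inequality; these are cosmetic differences only.
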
 

The inequality is trivial if $\CAP(S)\le C_1$. The main point is that if $t$  is large or  $q$ is large, then 
necessarily $\CAP(S_k)>> C_1$ for some $k$ and so we get a non-trivial estimate.  

Before we turn to the proof of the lemma, we want to apply it to verify that the  measure $A$ satisfies the hypotheses of Lemma~\ref{lem:hunt} if $q\ge 2$ (which we may assume). To see  this,  it is enough to show that  $A(\R^n)=\infty$.  We use the remark in the beginning of the proof of 
Lemma~\ref{lem:capest}. Namely, its  conclusion is true 
for $q\ge 2$ and the ball $B=B(0,r)$ if $r>0$ is large enough (independently of the value
$A(\frac 12 B)$). 

It follows that we can fix $t\ge 2$ (independently of $r$ if $r$ is large enough) such that 
for one of the condensers $S_k$ we have $\text{Cap}(S_k)>2C_1$, where $C_1$ is the constant in Lemma~\ref{lem:grow<Riesz}.
Inequality \eqref{eq:Capest} then shows that we must have 
$A(\R^n)=\infty$, because the function $u_k$ is unbounded and so the right hand side in \eqref{eq:Capest} 
can be made arbitrarily large if we choose $r$ large enough. 

 \begin{proof}[Proof of Lemma~\ref{lem:grow<Riesz}]  In this proof all implicit multiplicative constants will only depend on $n$ and $K$. We have 
$$- \int  \mathcal{A}_u\cdot \nabla \varphi =\int \varphi \, dA$$
for all $\varphi\in C_c(\R^n)\cap W^{1,n}_{loc}(\R^n)$. 
We can 
use this identity for  the test function 
$$ \varphi= (M-u)_+\psi^n, $$ where we choose $\psi\in C^\infty_c(\R^n)$ so that it satisfies  
$$0\le \psi \le 1, \ \psi|tB = 1,\  \psi| \R^n\backslash 2tB= 0, \ |\nabla \psi|\lesssim \frac{1}{tr}. $$
 Then $0\le \varphi \le M$, 
$$  \nabla \varphi (x) = -\nabla u(x) \psi(x)^n + n (M-u(x))_+   \nabla \psi(x) \psi^{n-1}(x)$$
for a.e.\ $x\in E$, and $\nabla \varphi (x)=0$ for a.e.\ $x\in \R^n \backslash  E$, where 
 $$E:=  2tB\cap\{ u < M\}\supseteq S.$$  
Hence 
\begin{align*}
\int_E   \psi^n |\nabla u|^n& \lesssim \int_E  \psi^n
 \mathcal{A}_u\cdot \nabla u    \\ &= \int  (M-u)_+\psi^n\, dA+ 
n \int_E   (M-u)_+   \psi^{n-1} \mathcal{A}_u\cdot   \nabla \psi \\
&\lesssim M A(2t B) + M \int_E    \psi^{n-1} |\mathcal{A}_u| \cdot
  |\nabla \psi| \\
& \lesssim M A( 2t B) + M \int_E  \psi^{n-1}\ |\nabla u|^{n-1}    |\nabla \psi| 
\\
& \lesssim M A(2t  B) +M   \biggl(\int_E \psi^n  |\nabla u|^{n}  \biggr)^{(n-1)/n}
\biggl(\int |\nabla \psi|^{n} \biggr)^{1/n}\\
&
 \lesssim M A(2t B) +M \biggl(\int_E \psi^n  |\nabla u|^{n}  \biggr)^{(n-1)/n}.
\end{align*}
In the last step we used that  $\displaystyle \int |\nabla \psi|^{n}\lesssim 1$. 

Now if $\alpha\le \beta + \gamma\alpha^{(n-1)/n}$ for $\alpha,\beta,\gamma\ge 0$, then 
$ \alpha \le \max\{2\beta, 2^n \gamma^n\}$. 
It follows that 
$$ \int_E |\nabla u|^n \psi^n \lesssim \max\{M A(2 t B), M^n\}.
$$
By \eqref{eq:capest} we have 
$$M^n \CAP(S) \lesssim  \int_S |\nabla u|^n \le \int_E |\nabla u|^n \psi^n $$ 
and so we conclude that 
\begin{align*}
M^n \CAP(S) &\lesssim  \max\{M A(2 t  B), M^n\}\\
&\le M A(2tB) + M^n. 
\end{align*}
The claim follows. 
\end{proof}

We are now ready to wrap things up.

\begin{proof}[Proof of Theorem~ \ref{rickpic}] 
We use the setup and the notation introduced in the beginning of this section. 

Let $C_0>0$ be the constant from Corollary~\ref{cor:meettract}, and $D=D(n)>0$ be the constant from Lemma~\ref{lem:hunt}. 
As we have seen, we can apply this lemma to  the measure $A$ if $q\ge 2$ as we may assume.
It follows  that there exists a ball $B=B(a,r)$ such that 
$$ A(\tfrac 12 B) >C_0\text{ and } A(4B) < D A(\tfrac 12 B). $$
On the other hand, by Lemma~\ref{lem:capest} there exists $k\in \{1, \dots, q\}$ such that 
for the condenser $S=S_k$ as defined in \eqref{S_k} with $t=2$, we have 
$$ \CAP(S)\gtrsim q^{1/(n-1)}. $$ We may assume that $q^{1/(n-1)} \ge 2 C_1$. 
Combining Lemmas \ref{lem:Riesz<grow} and \ref{lem:grow<Riesz} for $t=2$ and  $u=u_k$, we obtain  
$$ DA(\tfrac 12 B) \ge A(4B) \gtrsim  q^{1/(n-1)}  \sup_{x\in B}  u(x)^{n-1} \gtrsim q^{1/(n-1)} A(\tfrac 12 B).
$$  In the previous inequalities all implicit multiplicative constants only depend on $n$ and $K$. It follows that the previous inequality is impossible  if   $q$ exceeds a constant only depending on $n$ and $K$. 
\end{proof}

 
\end{document}